\title{Stochastic integrals and Gelfand integration in Fr\'echet spaces}
\author{Fred Espen Benth and Luca Galimberti}
\newtheorem{theorem}{Theorem}[section]
\newtheorem{definition}[theorem]{Definition}
\newtheorem{lemma}[theorem]{Lemma}
\newtheorem{proposition}[theorem]{Proposition}
\newtheorem{corollary}[theorem]{Corollary}
\newtheorem{example}[theorem]{Example}
\newtheorem{remark}[theorem]{Remark}
\newcommand{\R}{\mathbb R}
\newcommand{\C}{\mathbb C}
\newcommand{\N}{\mathbb N}
\newcommand{\norm}[1]{\left\lVert#1\right\rVert}
\newcommand{\abs}[1]{\left |#1\right|}
\date{\today}
\thanks{Luca Galimberti has been supported in part by the grant {\it Waves and
    Nonlinear Phenomena (WaNP)} from the Research Council of Norway.}
\begin{document}

\maketitle

\begin{abstract}
We provide a detailed analysis of the Gelfand integral on Fr\'echet spaces, showing among other things a Vitali threorem, dominated convergence and a Fubini result. Furthermore, the Gelfand integral commutes with linear operators. The Skorohod integral is conveniently expressed in terms of a Gelfand integral on Hida distribution space, which forms our prime motivation and example. We extend several results of Skorohod integrals to a general class of pathwise Gelfand integrals. For example, we provide generalizations of the Hida-Malliavin derivative and extend the {\it integration-by-parts} formula in Malliavin Calculus. A Fubini-result is also shown, based on the commutative property of Gelfand integrals with linear operators. Finally, our studies give the motivation for two existing definitions of stochastic Volterra integration in Hida space. 
\end{abstract}

\section{Introduction}
We study the Gelfand integral on Fr\'echet spaces with the aim at providing a general pathwise approach to stochastic integration. Our theoretical considerations and analysis significantly generalize the theory of stochastic integration in the White Noise analysis context. We extend several known results, as well as develop new results.

The leading example and motivation for our studies is stochastic integration with respect to Brownian motion cast in White Noise analysis. Following Hida {\it et al.} \cite{HKPS}, let $(\mathcal S)$ be the Hida test function space, which is the subspace of smooth random variables of $L^2(\mathbb P)$, with $\mathbb P$ being the white noise probability measure on the Schwartz distributions $\mathcal S'(\mathbb R)$. On the dual $(\mathcal S)'$, the Hida distribution space, one can define the stochastic integral with respect to Brownian motion $B$ for processes $X$ in the Hida distribution space as the so-called Skorohod integral,
\begin{equation}
\label{def:skorohod-integral}
    \int_0^tX(s)\delta B(s):=\int_0^t X(s)\diamond W(s)ds.
\end{equation}
Here, $W$ is the {\it white noise} given by the time-derivative of Brownian motion $W(t):=\dot{B}(t)$, $\diamond$ is the Wick product defined on the Hida distribution space and the right-hand side in \eqref{def:skorohod-integral} is the Pettis integral in $(\mathcal S)'$. 
Thus, we see from the definition \eqref{def:skorohod-integral} of the Skorohod integral  that we can do integration with respect to a function (here Brownian motion $B$) in terms of a Pettis integral with respect to a measure on a measure space (here the Lebesgue measure on the real line). To achieve this, we must introduce an operator acting on the integrand (here Wick multiplication) which depends on the path of the integrator (here the derivative of the Brownian motion, $W$). Moreover, the integral is set in the dual of a Fr\'echet space (here the Hida distribution space).

With this in mind, we can approach stochastic integration abstractly as Gelfand integration of Fr\'echet-valued integrands over general measure spaces. 
In this paper we provide an extensive analysis of Gelfand integration on Fr\'echet spaces, i.e., on integrals of the type 
\begin{equation}
\label{gelfand-int-intro}
    \int_E\psi(x)\mu(dx)
\end{equation}
where $\mu$ is a measure on a measurable space $(M,\mathcal M)$ and $\psi:M\rightarrow\mathfrak Z'$, the topological dual of a Fr\'echet space $\mathfrak Z$.

The Gelfand integral \eqref{gelfand-int-intro} lends itself to several nice properties, as stability in terms of continuity with respect to both the the domain of integration and the integrands. The stability with respect to the integration domain follows from playing with the topology on the Fr\'echet space and the measure space. We show a Vitali theorem and a dominated convergence result, where the arguments rest on analysis of the weak topology on $\mathfrak Z'$. We are also able to prove a rather general   
Fubini-type result for multiple Gelfand integrals. A central result for our further analysis is the linearity of the Gelfand integral, in the sense that linear operators commute with the integral in \eqref{gelfand-int-intro}. In our proceedings, different topologies on the dual space $\mathfrak Z'$ are intensively discussed and analysed in relevant contexts.  

We apply our general Gelfand integration framework to study pathwise integration by considering integrators which may be represented as integral operators on Fr\'echet spaces. I.e., we are interested in Gelfand integrals like
$$
\int_E\gamma(x)d\xi(x)
$$
where we can associate a map $\Xi$ on $\mathfrak Z'$ such that 
$$
\int_E\gamma(x)d\xi(x)=\int_E\Xi(\gamma)(x)\mu(dx)
$$
This setting includes Skorohod integrals like \eqref{def:skorohod-integral} in Hida space. Our abstract theory for Gelfand integrals enable us to establish several general results for the pathwise integrals as well as some new proofs for the Hida case. From the topological properties of Fr\'echet spaces and their duals, we are able to generalize the notion of the Hida-Malliavin derivative. Indeed, under some restrictions on $\Xi$, we can define an object being the dual operator of $\Xi$. If this dual is a derivative operator, it is a true generalization of the Gross derivative in Benth \cite{Benth}. Moreover, it provides us with a significant extension of the well-known "integration-by-parts formula" in Malliavin Calculus (see Nualart, \cite{Nualart-book} say), as well as the connection between Stratonovich- and Ito-type stochastic integrals. Our results in this direction are based on Fr\'echet spaces with an algebra structure. Furthermore, general Fubini-type results for multiple pathwise integrals are provided, as well as stability results of the integral with respect to both integrands and integrators. As a final application, we discuss stochastic Volterra integrals in the Hida distribution space, where we relate different definitions to the choice of map $\Xi$. Our study provides a motivation for two existing definitions of stochastic Volterra integration as found in Alos {\it et al.} \cite{AMN} and Barndorff-Nielsen {\it et al.} \cite{BNBPV}.   

Our analysis and derivations are presented as follows: in Section 2 we provide some background material on Fr\'echet spaces, including some further discussions on the particular case of Hida spaces. The analysis of the general Gelfand integral is presented in Section 3, whereas the pathwise integration theory can be found in Section 4. In an appendix, we prove some results related to white noise analysis and generalized Malliavin derivative which is needed in Section 4.


\section{A rapid excursion on Fr\'echet spaces}\label{sec: rapid excursion}

A formal prerequisite for the reading of this paper is familiarity with the basic facts of the theory of locally convex spaces. The purpose of this preliminary section is not to establish these facts, but to clarify terminology and notation, and recall the results which will be used in the sequel. The reference we are sourcing from is the monumental book by Schaefer \cite{Schaefer}.

Let $\mathbb{F}\in\{\R,\C\}$. For an arbitrary topological vector space $(\mathfrak{X},\mathfrak{T})$ over $\mathbb{F}$, $\mathfrak{X}^\ast$ will denote its algebraic dual, while $\mathfrak{X}'$ will denote its topological dual, which is clearly a subspace of $\mathfrak{X}^\ast$.

A topological vector space $\mathfrak{Z}$ over $\mathbb{F}$ will be termed a \texttt{Fr\'echet space} if it is metrizable, complete and locally convex. Being it Fr\'echet, we may assume that there exists a non decreasing sequence of seminorms $(p_n)_{n\in \N}$ which generates its topology (see \cite[page 48]{Schaefer}).

If $\mathfrak{Z}'$ designates its topological dual, then $\mathfrak{Z}'$ separates points (see \cite[Page 48]{Schaefer}). This fact allows us to deduce that $\mathfrak{Z}$ and $\mathfrak{Z}'$ are in duality (see \cite[Page 123]{Schaefer}). Depending on the situation, we will denote the duality either $\langle \mathfrak{Z},\mathfrak{Z}' \rangle$ or $\langle \mathfrak{Z}',\mathfrak{Z} \rangle$.

We will consider different topologies at once on $\mathfrak{Z}'$. Let us list the ones we will be using. We recall that a locally convex topology $\mathfrak{T}$ on $\mathfrak{Z}'$ is called \texttt{consistent} with the duality $\langle \mathfrak{Z}',\mathfrak{Z} \rangle$ if $ (\mathfrak{Z}',\mathfrak{T})' =\mathfrak{Z}  $, where $\mathfrak{Z}$ is viewed as a subspace of $(\mathfrak{Z}')^\ast$. All consistent topologies are Hausdorff.

\begin{enumerate}
    \item The \texttt{weak star topology} $\sigma(\mathfrak{Z}',\mathfrak{Z})$, which is by definition the coarsest consistent topology on $\mathfrak{Z}'$. $\mathfrak{Z}'$ endowed with $\sigma(\mathfrak{Z}',\mathfrak{Z})$ is called the \texttt{weak dual} of $\mathfrak{Z}$.
    \item The \texttt{strong topology} $\beta(\mathfrak{Z}',\mathfrak{Z})$, which is the topology induced by the following seminorms (see \cite[page 81]{Schaefer}):
    \begin{equation*}
    p_B(z') = \sup_{z\in B}\abs{\langle z',z\rangle}\,, \;\;\;z' \in \mathfrak{Z}'\,,
    \end{equation*}
    where $B\subset\mathfrak{Z}$ ranges over all bounded (or, equivalently, weakly bounded, see Schaefer \cite[page 132]{Schaefer}) subsets of $\mathfrak{Z}$. $\mathfrak{Z}'$ endowed with $\beta(\mathfrak{Z}',\mathfrak{Z})$ is termed the \texttt{strong dual} of $\mathfrak{Z}$. (It is worth to stress that in general the strong topology $\beta(\mathfrak{Z}',\mathfrak{Z})$ is not consistent with the duality, \cite[page 141]{Schaefer}). We recall than an arbitrary net $(z'_i)_{i}\subset\mathfrak{Z}'$ converges to $z'\in\mathfrak{Z}'$ with respect to $\beta(\mathfrak{Z}',\mathfrak{Z})$ if and only if $p_B(z'_i-z')\to 0$ for any $B\subset\mathfrak{Z}$ bounded.
    \item The \texttt{Mackey topology} $\tau(\mathfrak{Z}',\mathfrak{Z})$, which is by definition the finest consistent topology on $\mathfrak{Z}'$ (see \cite[Corollary 1, page 131]{Schaefer}).
\end{enumerate}

The following useful facts will be used repeatedly and tacitly during the proofs of the subsequent results:
\begin{enumerate}
    \item A mapping $u$ from a topological space $(\mathfrak{Y},\mathfrak{T})$ into $(\mathfrak{Z}',\sigma(\mathfrak{Z}',\mathfrak{Z}))$ is continuous if and only if for each $z \in \mathfrak{Z}$ the map $\langle u(\cdot),z\rangle$ is continuous on $\mathfrak{Y}$ into $\mathbb
    F$ (compare with \cite[Page 51]{Schaefer}).
    \item If $(\mathfrak{Y},\rho)$ is a metric space, then the continuity of $u:\mathfrak{Y}\to \mathfrak{G}$, where $(\mathfrak{G},\mathfrak{S})$ is an arbitrary topological space, is equivalent to its sequential continuity.
\end{enumerate}

If we are given a linear map $u:(\mathfrak{Z}',\sigma(\mathfrak{Z}',\mathfrak{Z}))\to (\mathfrak{F}',\sigma(\mathfrak{F}',\mathfrak{F}))$, where $\mathfrak{F}$ is another Fr\'echet space, then in virtue of \cite[Proposition 2.1, page 128]{Schaefer} we have that $u$ is continuous if and only if $u^\ast(\mathfrak{F}) \subset \mathfrak{Z}$, where $u^\ast: (\mathfrak{F}')^\ast\to (\mathfrak{Z}')^\ast$ is the algebraic adjoint of $u$ and
\begin{equation*}
    \langle uz', f \rangle = \langle z',u^\ast f \rangle, \quad z'\in \mathfrak{Z}',\, f \in \mathfrak{F}\subset (\mathfrak{F}')^\ast\,.
\end{equation*}

A \texttt{barrel} in a topological vector space $\mathfrak{X}$ is a subset which is radial, convex, circled and closed. A locally convex space $\mathfrak{X}$ is \texttt{barelled} if each barrel in $\mathfrak{X}$ is a neighborhood of 0. In virtue of \cite[Corollary on page 60]{Schaefer}), every Fr\'echet space is barelled. 

A topological vector space $\mathfrak{X}$ is a \texttt{Montel} space if it is barelled and if every closed and bounded subset of $\mathfrak{X}$ is compact. We remark that, if $\mathfrak{X}$ is assumed to be normed, then it must necessarily be finite-dimensional.

Let us examine the consequences of the special case in which $\mathfrak{Z}$, in addition to be a Fr\'echet space, is assumed to be a Montel space. Theorem 5.9 at page 147 in \cite{Schaefer} guarantees that $(\mathfrak{Z}',\beta(\mathfrak{Z}',\mathfrak{Z}))$ is itself a Montel space. Furthermore, in view of the following two general facts \cite[page 194]{Schaefer}
\begin{enumerate}
    \item A separable Fr\'echet space $\mathfrak{E}$ is a Montel space if and only if each  $\sigma(\mathfrak{E}',\mathfrak{E})$-convergent sequence in $\mathfrak{E}'$ is strongly convergent,
    \item A metrisable Montel space is separable,
\end{enumerate}  
we have the equivalence of weak and strong convergence of sequences in $\mathfrak{Z}'$, namely $z'_n \to 0$ wrt $\sigma(\mathfrak{Z}',\mathfrak{Z})$ if and only if $z'_n\to 0$ wrt $\beta(\mathfrak{Z}',\mathfrak{Z})$.

Following \cite[page 202]{Schaefer}, by an \texttt{algebra}, we mean an $\mathbb F$-vector space $A$ endowed with a bilinear map $A\times A \to A$ called \texttt{multiplication} and usually denoted by $(a,b)\mapsto a\cdot b$. This product is neither assumed associative nor carrying a unit. An algebra $A$ will be termed a \texttt{locally convex algebra} if the underlying space is a locally convex space and if the multiplication is separately continuous. 

\subsection{The Hida spaces}

The Hida spaces serve as the main example of Fr\'echet spaces in our exposition. For the convenience of the reader, we make a brief introduction of the basics of these spaces of smooth and generalized random variables. Our presentation is based on the classical book of Hida {\it et al.} \cite{HKPS}, where further details and theory can be found. In the way we naturally encounter the Schwartz distribution spaces, another example of a Fr\'echet space.

With $n\in\mathbb N$, we denote by $\mathcal S(\mathbb R^n)$ the Schwartz functions on $\mathbb R^n$, with topological dual $\mathcal S'(\mathbb R^n)$. For the second order differential (Hamiltonian) operator $A:=-d^2/dx^2+(1+x^2)$ and  $\vert\cdot\vert_{2,0}:=\vert\cdot\vert_2$ being the norm in $L^2(\mathbb R^n)$, introduce $\vert\cdot\vert_{2,p}:=\vert(A^{\otimes n})^p\cdot\vert_2$ for $p\in\mathbb N\cup\{0\}$. With the Hilbert spaces $\mathcal S_p(\mathbb R^n)$ being the completion of $\mathcal S(\mathbb R^n)$ in the norm $\vert\cdot\vert_{2,p}$, one has that $\mathcal S(\mathbb R^n)$ is the projective limit of $\mathcal S_p(\mathbb R^n)$, while the tempered Schwartz distributions $\mathcal S'(\mathbb R^n)$ is the inductive limit of the dual spaces, denoted $\mathcal S_{-p}(\mathbb R^n)$. 

Define a probability space $(\mathcal S'(\mathbb R), \mathcal F,\mathbb P)$ where $\mathcal F$ is the $\sigma$-algebra induced by the weak topology and the probability measure $\mathbb P$ is defined by the Bochner-Minlos theorem as
$$
\int_{\mathcal S'(\mathbb R)}\exp((\omega,f))\mathbb P(d\omega)=\exp(-\frac12\vert f\vert_2^2)
$$
where $(\omega,f)$ is the dual pairing between $\mathcal S'(\mathbb R)$ and $\mathcal S(\mathbb R)$. We see that 
$\omega\mapsto W(f)(\omega):=(\omega,f)$ defines a mean-zero Gaussian random variable with variance $\vert f\vert_2^2$. In fact, Brownian motion $B(t):=(\cdot,\mathrm{1}_{[0,t]})$ exists as a limit in $(L^2):=L^2(\mathcal S'(\mathbb R),\mathcal F,\mathbb P)$. Any random variable $X\in(L^2)$ has a so-called chaos expansion
$$
X=\sum_{n=0}^{\infty}I_n(f^{(n)}),
$$
where $I_n$ are the $n$-fold Wiener-It\^o integrals on $\mathbb R^n$ and $f^{(n)}\in L^2(\mathbb R^n)$ are symmetric functions. The $(L^2)$-norm of $X$, being its second moment, is
$$
\Vert X\Vert_{2}^2=\sum_{n=0}^{\infty}n!\vert f^{(n)}\vert_2^2.
$$
In passing, we introduce the Malliavin derivative (see e.g. Nualart \cite{Nualart-book}) of elements in a subspace of $(L^2)$. Denote by $\mathcal D_{1,2}$ the subspace of $(L^2)$ for which 
$$
\sum_{n=0}^{\infty}n n!\vert f^{(n)}\vert_2^2<\infty.
$$
The Malliavin derivative $D_t$ for $t\in\mathbb R$ of $X\in\mathcal D_{1,2}$ is defined as the element $D_tX\in(L^2)$ with chaos expansion
$$
D_tX:=\sum_{n=1}^{\infty}n I_{n-1}(f^{(n)}(\cdot,t)).
$$
The Malliavin derivative is central in the definition of the Skorohod stochastic integral.  

To define the Hida test functions (or, smooth random variables), denote by $(\mathcal S)_p$ the Hilbert space of random variables $\phi\in(L^2)$ with chaos functions $f^{(n)}\in\mathcal S_p(\mathbb R^n)$ for which
$$
\Vert \phi\Vert_{2,p}^2:=\sum_{n=0}^{\infty}n!\vert f^{(n)}\vert_{2,p}^2<\infty.
$$
Letting $(\mathcal S)_{-p}$ denote the dual, we define $(\mathcal S)$ as the projective limit of $(\mathcal S)_{p}$ and $(\mathcal S)'$ the inductive limit of $(\mathcal S)_{-p}$. The spaces $(\mathcal S)$ and $(\mathcal S)'$ are known as the Hida test function and distribution space, resp. Formally, elements $\Phi\in(\mathcal S)'$ has chaos expansion
$$
\Phi=\sum_{n=0}^{\infty}I_n(F^{(n)})
$$
with symmetric elements $F^{(n)}\in\mathcal S'(\mathbb R^n)$, where the duality pairing between
$(\mathcal S)$ and $(\mathcal S)'$ becomes
$$
\langle\Phi,\phi\rangle=\sum_{n=0}^{\infty}n!(F^{(n)},f^{(n)}).
$$
The canonical example of a Hida distribution is {\it white noise}, being the generalized random variable $W(t):=I_1(\delta_t)$. Here, $\delta_t\in\mathcal S'(\mathbb R)$ is the Dirac-$\delta$ function. 

The $\mathcal S$-transform $\mathcal S\Phi$ of an element $\Phi\in(\mathcal S)'$ is
$$
\mathcal S(\mathbb R)\ni\xi\mapsto \mathcal S\Phi(\xi):=\langle\Phi,\exp(W(\xi)-\frac12\vert\xi\vert_2^2)\rangle 
$$
which is a bijection between $(\mathcal S)'$ and the space of so-called $\mathcal U$-functionals. The $\mathcal S$-transform provides a convenient characterisation of Hida distributions, as well as being a powerful computational tool for analysis. The Wick product between two elements $\Phi,\Psi\in(\mathcal S)'$ , denoted $\Phi\diamond\Psi$, is defined as a "convolution product" for the $\mathcal S$-transform, 
$$
\Phi\diamond\Psi:=\mathcal S^{-1}(\mathcal S\Phi\cdot\mathcal S\Psi).
$$
We remark in passing that in the literature, the Hida distribution space is frequently denoted by $(\mathcal S)^*$, however, we reserve that notation for the algebraic dual.  

Notice that both $\mathcal S(\mathbb R^n)$ and $(\mathcal S)$, the Schwartz and Hida test function spaces, are nuclear Frechet spaces. Moreover, they are both also Montel spaces (see Hida {\it et al. \cite[page 483]{HKPS})}. In the case of the Hida space $(\mathcal S)$, recall that the bounded subsets of $\mathfrak{Z}$ are precisely the ones which are bounded in every norm $\abs{\cdot}_p,\, p\in \N_0$ (see Hida {\it et al.} \cite[page 481]{HKPS}).

We end this section with noticing that there exists many other triplets of spaces $(\mathcal D)\subset(L^2)\subset(\mathcal D)'$ based on chaos expansions and projective and inductive limits. For example, Holden {\it et al.} \cite{HOUZ} introduces the Kondratiev spaces of smooth and generalized random variables.

\section{The Fr\'echet-valued Gelfand integral}
\label{sec:Gelfand}
In this section we introduce the Fr\'echet-valued Gelfand integral and study in detail several properties of it.  

To this end, let $(M,\mathcal{M},\mu)$ be an arbitrary $\sigma$-finite measure space, and let us consider an $\mathbb{F}$-Fr\'echet space $\mathfrak{Z}$. Let $\mathfrak{Z}'$ designate its topological dual. Let us consider a map

\[
\psi: M \to \mathfrak{Z}'
\]
such that for any $z\in\mathfrak{Z}$, the function $x\mapsto \langle\psi(x),z \rangle$ is an element of $L^1(\mu)$. Here we reserve the notation $\langle\cdot,\cdot\rangle$ for the dual pairing between elements in $\mathfrak{Z}'$ and $\mathfrak{Z}$. Furthermore, we will denote the norm in $L^1(\mu)$ by $\Vert\cdot\Vert_{L^1(\mu)}$.

The following result is an easy generalization of Hida {\it et al.} \cite[Prop. 8.1]{HKPS} (where the authors again refer to Hille and Phillips \cite{HS} for the proof in the Banach case):
\begin{proposition}\label{prop: the integral}
Let $E\in \mathcal{M}$. Then the map $\Lambda_E:\mathfrak{Z}\to \mathbb{F}$ given by
\[
\Lambda_E(z) = \int_E \langle \psi(x),z\rangle\,\mu(dx)\,,\;\; z\in \mathfrak{Z}
\]
belongs to $\mathfrak{Z}'$.
\end{proposition}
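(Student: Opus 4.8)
The plan is to verify the two defining properties of an element of $\mathfrak{Z}'$ for the functional $\Lambda_E$, namely linearity and continuity. Linearity is immediate from the bilinearity of the dual pairing and the linearity of the integral, so the whole content of the statement lies in the continuity of $\Lambda_E$.

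To obtain continuity I would factor $\Lambda_E$ through $L^1(\mu)$. Consider the linear map $T\colon \mathfrak{Z}\to L^1(\mu)$ defined by $T(z)=\langle\psi(\cdot),z\rangle$, which is well defined precisely because of the standing assumption on $\psi$. Then $\Lambda_E=\ell_E\circ T$, where $\ell_E\colon L^1(\mu)\to\mathbb{F}$ is integration over $E$, a continuous functional since $\abs{\ell_E(f)}\le\norm{f}_{L^1(\mu)}$. Hence it is enough to show that $T$ is continuous. Since both $\mathfrak{Z}$ and $L^1(\mu)$ are Fr\'echet spaces, by the closed graph theorem it suffices to check that the graph of $T$ is closed. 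To this end I would take a sequence $z_n\to z$ in $\mathfrak{Z}$ with $T(z_n)\to g$ in $L^1(\mu)$ and argue that $g=T(z)$: passing to a subsequence, $\langle\psi(\cdot),z_{n_k}\rangle\to g$ $\mu$-a.e.; on the other hand, for each fixed $x\in M$ the functional $\langle\psi(x),\cdot\rangle$ belongs to $\mathfrak{Z}'$ and is therefore continuous on $\mathfrak{Z}$, so $\langle\psi(x),z_n\rangle\to\langle\psi(x),z\rangle$ for every $x\in M$. Comparing the two limits yields $g=\langle\psi(\cdot),z\rangle=T(z)$ as elements of $L^1(\mu)$, so the graph is closed and $T$, hence $\Lambda_E$, is continuous.

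The routine steps — linearity, well-definedness of $T$, boundedness of $\ell_E$, and the extraction of an a.e.-convergent subsequence — are all standard; the one genuine obstacle is the continuity of $T$. A direct verification of sequential continuity of $\Lambda_E$, legitimate since $\mathfrak{Z}$ is metrizable, would require a dominating function in $L^1(\mu)$ for the sequence $\bigl(\langle\psi(\cdot),z_n\rangle\bigr)_n$, which the hypothesis does not supply: the assumption only provides, for each individual $x$, an estimate of the form $\abs{\langle\psi(x),z\rangle}\le C_x\,p_{n(x)}(z)$ with no control on how $C_x$ and the index $n(x)$ depend on $x$. The closed graph theorem is exactly the device that upgrades this pointwise information into a single seminorm estimate for $T$. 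Finally, one should note that $\sigma$-finiteness of $\mu$ plays no role beyond the completeness of $L^1(\mu)$ and the fact that $L^1$-convergence forces $\mu$-a.e.\ convergence along a subsequence, both of which hold for an arbitrary measure.
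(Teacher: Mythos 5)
Your proof is correct and follows essentially the same route as the paper: both factor $\Lambda_E$ through $L^1(\mu)$ and obtain continuity of the intermediate linear map via the closed graph theorem, checking closedness by extracting a $\mu$-a.e.\ convergent subsequence and comparing with the pointwise limit supplied by the continuity of each $\psi(x)\in\mathfrak{Z}'$. The only cosmetic difference is that the paper inserts the indicator $I_E$ into the map to $L^1(\mu)$ and then integrates over all of $M$, while you integrate over $E$ directly; the two are equivalent.
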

\begin{proof}
We adapt the proof of Hida {\it et al.} \cite[Prop. 8.1]{HKPS}. $\Lambda_E$ is clearly well defined and linear, thus only continuity needs to be checked. We set 
\begin{equation}
\label{eq:Psi-op}
\Psi_E:\mathfrak{Z}\to L^1(\mu)\,,\;\;\; z\mapsto\langle I_E(\cdot)\psi(\cdot),z\rangle\,,
\end{equation}
where $I_E$ denotes the indicator function on $E$. We claim that $\Psi_E$ is closed. Indeed, given $z_n\overset{\mathfrak{Z}}{\to}z$ and $\Psi_E(z_n)\overset{L^1(\mu)}{\longrightarrow}v$, as $n\to \infty$, we have clearly $\langle I_E(x)\psi(x),z_n-z\rangle\to 0$, for any $x\in M$. 

On the other hand, by reverse dominated convergence theorem, there exists a subsequence $(n_j)_j$ such that $\Psi_E(z_{n_j})\to v$ $\mu$-a.e. $x\in M$, as $j\to\infty$. Therefore, $\Psi_E(z_{n_j})\to \Psi_E(z)$ $\mu$-a.e. $x\in M$, implying $v=\Psi_E(z)$ almost everywhere. Hence $\Psi_E$ is closed. By the closed graph theorem (see \cite[Thm. 2.3 on page 78]{Schaefer}), this is equivalent to continuity of $\Psi_E$ from $\mathfrak{Z}$ into $L^1(\mu)$. Since
$$
\Lambda_E(z)=\int_M\langle I_E(x)\psi(x),z\rangle \,\mu(dx) = \int_M \Psi_E(z)\,\mu(dx)\,
$$
and $\int_M\cdot \,\mu(dx)$ is linear and continuous from $L^1(\mu)$ into $\mathbb{F}$, we conclude that $\Lambda_E\in \mathfrak{Z}'$.
\end{proof}

In view of Proposition \ref{prop: the integral}, we give the following definition:
\begin{definition}
A map $\psi: M \to \mathfrak{Z}'$ such that for any $z\in\mathfrak{Z}$ the function $\langle\psi(\cdot),z \rangle$ belongs to $L^1(\mu)$ will be termed $\mathfrak{Z}'$-Gelfand $\mu$-integrable.
\end{definition}

From now on, for such a $\psi$ we set $\int_E\psi(x)\,\mu(dx):=\Lambda_E\in \mathfrak{Z}'$, for any $E\in\mathcal{M}$, and note that by definition it holds
$$
\int_E\langle \psi(x),z\rangle \,\mu(dx) = \left\langle \int_E   \psi(x) \,\mu(dx),z  \right\rangle \,.
$$
Let us next show some useful results and properties of this abstract integral.

Recall that we may assume that there exists a non decreasing sequence of seminorms $(p_n)_{n\in \N}$ which generates the topology of $\mathfrak{Z}$. Since for any $E\in \mathcal{M}$ the linear operator $\Psi_E$ defined in \eqref{eq:Psi-op} of the proof of Proposition \ref{prop: the integral} is continuous from $\mathfrak{Z}$ into $L^1(\mu)$, we infer (see \cite[Prop. 1.1. on page 74]{Schaefer}) that there exists a constant $C(E)\geq 0$ and an integer $n(E)\in \N$ such that
$$
\norm{\Psi_E(z)}_{L^1(\mu)}\leq C(E)\, p_{n(E)}(z)\,, \;\; z\in\mathfrak{Z}\,.
$$
Observe that if $\tilde{E}\in \mathcal{M}$ with $\tilde{E}\subset E$, then it holds $\norm{\Psi_{\tilde{E}}(z)}_{L^1(\mu)}\leq C(E)\, p_{n(E)}(z)$ for all $z\in \mathfrak{Z}$, since trivially $\norm{\Psi_{\tilde{E}}(z)}_{L^1(\mu)}\leq \norm{\Psi_E(z)}_{L^1(\mu)}$ for all $z$. Therefore, we obtain the following universal bound
$$
\norm{\Psi_E(z)}_{L^1(\mu)}\leq C(M)\, p_{n(M)}(z)\,,
$$
valid for all $E\in \mathcal{M}$ and $z\in\mathfrak{Z}$.

Observe also that, for any $E,\tilde{E}\in \mathcal{M}$, trivially one has $\abs{I_E(x) - I_{\tilde{E}}(x)}= I_{E\Delta \tilde{E}}(x)$, where $\Delta$ stands for the symmetric difference of two subsets. In view of this, the following fundamental inequality holds
\[
\begin{split}
\abs{\Lambda_E(z) - \Lambda_{\tilde{E}}(z)}  &= \abs{\int_M \langle (I_E(x) - I_{\tilde{E}}(x))\psi(x),z\rangle\,\mu(dx)  } \\
& \leq \int_M \abs{I_E(x) - I_{\tilde{E}}(x)}\abs{\langle \psi(x),z\rangle}\,\mu(dx)  \\
& =  \int_M \abs{\langle I_{E\Delta \tilde{E}}(x)\,\psi(x),z\rangle}\,\mu(dx) \\
& = \norm{\Psi_{E\Delta\tilde{E}}(z)}_{L^1(\mu)} \\
&\leq C(M)\, p_{n(M)}(z) 
\end{split}
\]
valid for any $z\in \mathfrak{Z}$.

We define $\Gamma_{E,\tilde{E}}:= \Lambda_E - \Lambda_{\tilde{E}}\in \mathfrak{Z}'$, with $E,\tilde{E}\in \mathcal{M}$. Then, from the last inequality we infer
\[
\abs{\Gamma_{E,\tilde{E}}(z)} \leq \norm{\Psi_{E\Delta\tilde{E}}(z)}_{L^1(\mu)} 
\leq C(M)\, p_{n(M)}(z)\,,
\]
for all $z\in\mathfrak{Z}$ and $E,\tilde{E}\in \mathcal{M}$. It hence follows that $\{\Gamma_{E,\tilde{E}}\}_{E,\tilde{E}}$ is a simply bounded family of $\mathfrak{Z}'$ (see \cite[page 82]{Schaefer}). Since $\mathfrak{Z}$ is barelled, by means of \cite[Thm. 4.2 on page 83]{Schaefer} we get that $\{\Gamma_{E,\tilde{E}}\}_{E,\tilde{E}}$ is equicontinuous. By the Alaoglu-Bourbaki theorem (see \cite[page 84]{Schaefer}), the family is relatively compact with respect to $\sigma(\mathfrak{Z}',\mathfrak{Z})$. In conclusion, we have proved the following result:

\begin{lemma}\label{lemma: fundamental inequality}
Let $E,\tilde{E}\in \mathcal{M}$. Then for all $z\in \mathfrak{Z}$ it holds
\begin{equation}\label{eq: fundamental inequality}
\abs{\Lambda_E(z) - \Lambda_{\tilde{E}}(z)}  \leq
\int_{E\Delta \tilde{E}} \abs{\langle \psi(x),z\rangle}\,\mu(dx)\,.
\end{equation}
Furthermore, the family $\{\Gamma_{E,\tilde{E}}\}_{E,\tilde{E} \in \mathcal{M}}\subset \mathfrak{Z}'$, where $\Gamma_{E,\tilde{E}}:= \Lambda_E - \Lambda_{\tilde{E}}$, is relatively compact with respect to $\sigma(\mathfrak{Z}',\mathfrak{Z})$.
\end{lemma}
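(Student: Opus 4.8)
The plan is to treat the two assertions of Lemma \ref{lemma: fundamental inequality} in turn, since the bulk of the work is actually already done in the running discussion preceding the statement. For the inequality \eqref{eq: fundamental inequality}, I would simply reproduce the chain of estimates displayed just above the lemma: write $\Lambda_E(z) - \Lambda_{\tilde E}(z) = \int_M \langle (I_E(x) - I_{\tilde E}(x))\psi(x), z\rangle\,\mu(dx)$, move the absolute value inside the integral, use the pointwise identity $|I_E(x) - I_{\tilde E}(x)| = I_{E\Delta\tilde E}(x)$, and recognize the resulting quantity as $\int_{E\Delta\tilde E} |\langle \psi(x), z\rangle|\,\mu(dx)$. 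All three integrals are finite because $\psi$ is $\mathfrak{Z}'$-Gelfand $\mu$-integrable and $E\Delta\tilde E \subset M$, so no integrability subtlety arises.

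For the relative compactness statement, the strategy is to show $\{\Gamma_{E,\tilde E}\}$ is equicontinuous in $\mathfrak{Z}'$ and then invoke Alaoglu--Bourbaki. First I would recall, from the continuity of the operator $\Psi_M:\mathfrak{Z}\to L^1(\mu)$ established in the proof of Proposition \ref{prop: the integral}, that there is a constant $C(M)\geq 0$ and an index $n(M)\in\N$ with $\norm{\Psi_M(z)}_{L^1(\mu)} \leq C(M)\,p_{n(M)}(z)$ for all $z\in\mathfrak{Z}$; since $\norm{\Psi_F(z)}_{L^1(\mu)}$ is monotone in $F$, the same bound holds with $\Psi_{E\Delta\tilde E}$ in place of $\Psi_M$ for every pair $E,\tilde E\in\mathcal M$. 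Combining this with \eqref{eq: fundamental inequality} gives $|\Gamma_{E,\tilde E}(z)| \leq C(M)\,p_{n(M)}(z)$ uniformly over all pairs, so $\{\Gamma_{E,\tilde E}\}$ is a simply (pointwise) bounded subset of $\mathfrak{Z}'$. Because $\mathfrak{Z}$ is a Fr\'echet space and hence barrelled, the Banach--Steinhaus-type theorem \cite[Thm. 4.2 on page 83]{Schaefer} upgrades simple boundedness to equicontinuity. Finally, the Alaoglu--Bourbaki theorem \cite[page 84]{Schaefer} says that an equicontinuous subset of $\mathfrak{Z}'$ is relatively $\sigma(\mathfrak{Z}',\mathfrak{Z})$-compact, which is the desired conclusion.

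I do not anticipate a genuine obstacle here, as the proof is essentially a repackaging of the preparatory computations. The one point requiring a little care is the passage from a bound that is uniform in $E,\tilde E$ but still depends on the fixed ambient index $n(M)$ and constant $C(M)$: one must make sure the universal bound $\norm{\Psi_{E\Delta\tilde E}(z)}_{L^1(\mu)} \leq C(M)\,p_{n(M)}(z)$ is invoked correctly, using both the monotonicity $\norm{\Psi_{\tilde F}(z)}_{L^1(\mu)} \leq \norm{\Psi_F(z)}_{L^1(\mu)}$ for $\tilde F\subset F$ and the fact that $n(M), C(M)$ work for all of $M$. Once that uniform estimate is in hand, "simply bounded $\Rightarrow$ equicontinuous $\Rightarrow$ relatively weak-$\ast$ compact" is a direct citation chain, and I would present the proof in exactly that order: first the inequality, then the uniform bound, then simple boundedness, then equicontinuity via barrelledness, then relative compactness via Alaoglu--Bourbaki.
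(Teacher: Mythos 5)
Your proposal is correct and follows essentially the same route as the paper: the paper's own argument is exactly the preparatory discussion you cite, deriving \eqref{eq: fundamental inequality} from $\abs{I_E - I_{\tilde E}} = I_{E\Delta\tilde E}$, obtaining the uniform bound $\abs{\Gamma_{E,\tilde E}(z)} \leq C(M)\,p_{n(M)}(z)$ via the continuity and monotonicity of $\Psi_F$, and then concluding by simple boundedness, barrelledness (equicontinuity), and Alaoglu--Bourbaki.
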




To show ``continuity'' of $\Lambda_E$ as a function of $E$, we proceed as follows. First, introduce the Fr\'echet-Nikodym metric on $\mathcal{M}/{\sim}$, where $\sim$ denotes the equivalence relation on $\mathcal{M}$ defined as $E\sim\tilde{E}$ whenever $\mu(E\Delta\tilde{E})=0$. The Fr\'echet-Nikodym metric $d_{\mu}$ is then  
$$
d_{\mu}: (\mathcal{M}/{\sim})^2\to [0,\infty]\,,\;\;\; ([E],[\tilde{E}])\mapsto d_{\mu}([E],[\tilde{E}]):= \mu(E\Delta\tilde{E})\,.
$$
Observe that the metric is allowed to assume the value $\infty$, because the measure space is assumed to be $\sigma$-finite only. 

Notice also that if $E\sim\tilde{E}$, then clearly for all $z\in \mathfrak{Z}$ we have
\[
\begin{split}
\int_E\langle \psi(x),z\rangle \,\mu(dx) & = \int_{E\setminus\tilde{E}}\langle \psi(x),z\rangle \,\mu(dx) + \int_{E\cap\tilde{E}}\langle \psi(x),z\rangle \,\mu(dx)      \\
&= \int_{E\cap\tilde{E}}\langle \psi(x),z\rangle \,\mu(dx)  +
\int_{\tilde{E}\setminus E}\langle \psi(x),z\rangle \,\mu(dx) \\
&=\int_{\tilde{E}}\langle \psi(x),z\rangle \,\mu(dx)\,, 
\end{split}
\]
namely, we can unambiguously define a map from $\mathcal{M}/{\sim}$ into $\mathfrak{Z}'$ as
$$
[E]\mapsto\int_E\psi(x)\,\mu(dx)
$$

We are ready to prove
\begin{proposition}
\label{prop: weak star continuity wrt FN metric}
The map $[E]\mapsto\int_E\psi(x)\,\mu(dx)$ is continuous from $(\mathcal{M}/{\sim},d_{\mu})$ into $(\mathfrak{Z}',\sigma(\mathfrak{Z}',\mathfrak{Z}))$, i.e. weak star continuous. 
\end{proposition}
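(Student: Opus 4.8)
The plan is to reduce weak star continuity to continuity of each scalar coordinate map, exploiting the characterization recorded in the preliminaries (a map into $(\mathfrak{Z}',\sigma(\mathfrak{Z}',\mathfrak{Z}))$ is continuous iff for every $z\in\mathfrak{Z}$ the composition with $\langle\cdot,z\rangle$ is continuous), together with the fact that $(\mathcal{M}/{\sim},d_{\mu})$ is a metric space, so sequential continuity suffices. Thus I would fix $z\in\mathfrak{Z}$ and show that $[E]\mapsto\Lambda_E(z)=\int_E\langle\psi(x),z\rangle\,\mu(dx)$ is continuous from $(\mathcal{M}/{\sim},d_{\mu})$ into $\mathbb{F}$.

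The main estimate is already available: Lemma \ref{lemma: fundamental inequality} gives
$$
\abs{\Lambda_E(z)-\Lambda_{\tilde{E}}(z)}\leq\int_{E\Delta\tilde{E}}\abs{\langle\psi(x),z\rangle}\,\mu(dx)\,.
$$
So given a sequence $[E_k]\to[E]$ in $d_{\mu}$, i.e. $\mu(E_k\Delta E)\to 0$, I must argue that $\int_{E_k\Delta E}\abs{\langle\psi(x),z\rangle}\,\mu(dx)\to 0$. This is the absolute continuity of the finite measure $\nu(A):=\int_A\abs{\langle\psi(x),z\rangle}\,\mu(dx)$ (finite since $\langle\psi(\cdot),z\rangle\in L^1(\mu)$) with respect to $\mu$: if $\mu(A_k)\to 0$ then $\nu(A_k)\to 0$. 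Here $A_k:=E_k\Delta E$ and $\mu(A_k)\to 0$ by hypothesis. I would prove this absolute continuity directly by the standard $\varepsilon$--$\delta$ argument: given $\varepsilon>0$, choose (by integrability) $\delta>0$ such that $\int_A\abs{\langle\psi(x),z\rangle}\,\mu(dx)<\varepsilon$ whenever $\mu(A)<\delta$; this $\delta$ works for all $k$ large enough that $\mu(A_k)<\delta$. Combining with the fundamental inequality yields $\Lambda_{E_k}(z)\to\Lambda_E(z)$, and since $z$ was arbitrary, the map $[E]\mapsto\int_E\psi\,\mu(dx)$ is weak star sequentially continuous, hence weak star continuous.

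The only genuinely delicate point is the $\sigma$-finiteness caveat: the Fr\'echet--Nikodym metric may take the value $\infty$, and $d_{\mu}([E_k],[E])\to 0$ already forces $\mu(E_k\Delta E)<\infty$ eventually, so the $\varepsilon$--$\delta$ reasoning applies to the sets $A_k$ without trouble. Equivalently one can note that convergence in $d_\mu$ to a class $[E]$ only constrains behaviour on sets of finite measure, exactly where the integrand is controlled. No other obstacle arises; everything else is the routine absolute-continuity-of-the-integral fact applied coordinatewise.
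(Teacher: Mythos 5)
Your proposal is correct and follows essentially the same route as the paper: reduce to the scalar maps $[E]\mapsto\Lambda_E(z)$ via the weak-star continuity criterion, apply the fundamental inequality of Lemma \ref{lemma: fundamental inequality}, and conclude by absolute continuity of the integral of the $L^1(\mu)$ function $\langle\psi(\cdot),z\rangle$. The paper phrases the argument directly in $\varepsilon$--$\delta$ form rather than through sequences, but the content is identical.
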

\begin{proof}
Fix $z\in\mathfrak{Z}$ and $\varepsilon>0$. By assumption $\langle\psi(\cdot),z \rangle$ is in $L^1(\mu)$. By the absolute continuity of the Lebesgue integral (see e.g. Billingsley \cite{B}), 
it follows that there exists a $\delta>0$ such that for any $F\in\mathcal{M}$ with $\mu(F)<\delta$, then $\int_F  \abs{\langle\psi(x),z \rangle}\,\mu(dx)<\varepsilon$. 

Therefore, for any $[E],[\tilde{E}]\in \mathcal{M}/{\sim}$ such that $d_{\mu}([E],[\tilde{E}])<\delta$, we will have $\mu(E\Delta\tilde{E})<\delta$, and thus inequality \eqref{eq: fundamental inequality} in Lemma \ref{lemma: fundamental inequality} implies
\[
\begin{split}
\abs{
\left\langle \int_E   \psi(x) \,\mu(dx),z  \right\rangle - 
\left\langle \int_{\tilde{E}}   \psi(x) \,\mu(dx),z  \right\rangle
}
& \leq  \int_{E\Delta \tilde{E}}
\abs{\langle\psi(x),z\rangle}\,\mu(dx) < \varepsilon
\end{split}
\]
provides us with the desired result.
\end{proof}
Observe that we are not requiring any improved integrability assumptions on $\psi$ in the Proposition \ref{prop: weak star continuity wrt FN metric} above.

For any $y=(y_1,\dots,y_k)\in\R^k$, let $E_y$ denote the subset $\times_{i=1}^k(-\infty,y_i]$. As an easy corollary (but with important consequences) we have
\begin{corollary}
\label{cor:integral-limit-continuity}
Let  $(M,\mathcal{M},\mu)=(\R^k,\mathcal{B}^k,Leb)$, where $Leb$ is the $k$-dimensional Lebesgue measure and $\mathcal{B}^k$ the Borel $\sigma$-algebra on $\mathbb R^k$. Then the map
$$
y\mapsto\int_{E_y}\psi(x)\,dx
$$
is continuous from $\R^k$ into $(\mathfrak{Z}',\sigma(\mathfrak{Z}',\mathfrak{Z}))$. 
\end{corollary}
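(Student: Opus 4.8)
The plan is to deduce the corollary directly from Proposition \ref{prop: weak star continuity wrt FN metric} by checking that the assignment $y\mapsto [E_y]$ is continuous from $\R^k$ into $(\mathcal{M}/{\sim},d_\mu)$; composing with the weak star continuous map $[E]\mapsto\int_E\psi\,d\mu$ then yields the claim. Since $\R^k$ is a metric space and $\sigma(\mathfrak{Z}',\mathfrak{Z})$-continuity is, by the facts recalled in Section \ref{sec: rapid excursion}, equivalent to continuity of each $\langle\,\cdot\,,z\rangle$, it actually suffices to fix $z\in\mathfrak{Z}$ and show that $y\mapsto\int_{E_y}\langle\psi(x),z\rangle\,dx$ is continuous; but it is cleaner to argue at the level of the Fréchet--Nikodym metric once and invoke the previous proposition as a black box.

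First I would fix $y\in\R^k$ and a sequence $y^{(m)}\to y$ in $\R^k$, and estimate $d_{Leb}(E_{y^{(m)}},E_y)=Leb(E_{y^{(m)}}\Delta E_y)$. The key observation is the elementary set-theoretic inclusion
\[
E_{y^{(m)}}\Delta E_y\subset \bigcup_{i=1}^k \left(\R^{i-1}\times I_i^{(m)}\times\R^{k-i}\right),
\]
where $I_i^{(m)}$ is the interval with endpoints $y_i$ and $y_i^{(m)}$; this follows because a point lies in the symmetric difference only if at least one coordinate separates the two ``corner points''. However, each slab on the right has infinite Lebesgue measure, so this crude bound is useless as written. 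The remedy is to localize: pick $R>0$ large enough that all the $y^{(m)}$ and $y$ lie in $(-R,R)^k$, restrict attention to the box $Q_R:=(-R,R)^k$, and split
\[
Leb(E_{y^{(m)}}\Delta E_y) = Leb\big((E_{y^{(m)}}\Delta E_y)\cap Q_R\big),
\]
the part outside $Q_R$ being empty because for $x\notin Q_R$ with some coordinate $x_i\le -R$ both indicators equal $1$, and if some coordinate $x_i\ge R$ then $x_i>y_i^{(m)}$ and $x_i>y_i$ so both indicators equal $0$. Wait --- one must be slightly careful: $x\notin Q_R$ could have, say, $x_1\ge R$ and $x_2\le -R$ simultaneously, but still both indicators agree (the large positive coordinate already forces both to be $0$). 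In all cases the two indicators agree off $Q_R$, so the symmetric difference is contained in $Q_R$. Intersecting the slab bound above with $Q_R$ then gives
\[
Leb(E_{y^{(m)}}\Delta E_y)\le \sum_{i=1}^k (2R)^{k-1}\,|y_i-y_i^{(m)}| \longrightarrow 0
\]
as $m\to\infty$, which is exactly what is needed.

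Hence $y^{(m)}\to y$ in $\R^k$ implies $d_{Leb}([E_{y^{(m)}}],[E_y])\to 0$, i.e.\ $y\mapsto[E_y]$ is (sequentially, hence, $\R^k$ being metric, topologically) continuous from $\R^k$ into $(\mathcal{M}/{\sim},d_{Leb})$. Composing with the weak star continuous map of Proposition \ref{prop: weak star continuity wrt FN metric} gives that $y\mapsto\int_{E_y}\psi(x)\,dx$ is continuous from $\R^k$ into $(\mathfrak{Z}',\sigma(\mathfrak{Z}',\mathfrak{Z}))$, which is the assertion. I expect the only genuine subtlety --- and thus the ``main obstacle'' --- to be the localization step: the naive slab estimate produces sets of infinite measure, and one must observe that the symmetric difference $E_{y^{(m)}}\Delta E_y$ is in fact contained in a fixed bounded box once $m$ is large, so that the measure is controlled by $(2R)^{k-1}\sum_i|y_i-y_i^{(m)}|$. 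Everything else is a routine application of the already-established proposition together with the metrizability remark from Section \ref{sec: rapid excursion}.
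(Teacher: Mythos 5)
There is a genuine error in your localization step, and it is fatal to the route you have chosen (which happens to be the same route the paper's own proof takes). Your claim that $E_{y^{(m)}}\Delta E_y\subset Q_R$ is false for $k\ge 2$: membership of $x$ in $E_y=\times_{i=1}^k(-\infty,y_i]$ requires \emph{every} coordinate to satisfy $x_i\le y_i$, so a single very negative coordinate $x_i\le -R$ does not force $I_{E_y}(x)=I_{E_{y^{(m)}}}(x)=1$; it only settles the $i$-th coordinate condition, while the remaining coordinates can still separate the two sets. Concretely, for $k=2$,
\[
E_{(0,0)}\Delta E_{(0,1)}=(-\infty,0]\times(0,1],
\]
which has infinite Lebesgue measure and contains points with first coordinate arbitrarily negative. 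More generally, $Leb(E_y\Delta E_{y'})=+\infty$ for every pair $y\ne y'$ when $k\ge 2$, since $E_{y}\Delta E_{y'}$ contains a product of $k-1$ half-lines with a nondegenerate interval. Hence the quantity you are estimating is identically $+\infty$ off the diagonal, the claimed bound $\sum_i(2R)^{k-1}\abs{y_i-y_i^{(m)}}$ is false, and $y\mapsto[E_y]$ is \emph{not} continuous into $(\mathcal{B}^k/{\sim},d_{Leb})$ for $k\ge2$. (For $k=1$ your computation is correct and the composition argument works.)

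The corollary itself is still true, but it cannot be deduced by composing with Proposition \ref{prop: weak star continuity wrt FN metric} when $k\ge 2$; the paper's own one-line proof asserts $Leb(E_y\Delta E_{y_j})\to 0$ without justification and suffers from exactly the same defect, so you have in effect uncovered a gap in the paper rather than filled one. A correct argument bypasses the Fr\'echet--Nikodym metric entirely: fix $z\in\mathfrak{Z}$ and use the fundamental inequality \eqref{eq: fundamental inequality} in the form
\[
\abs{\Lambda_{E_{y_j}}(z)-\Lambda_{E_y}(z)}\le\int_{\R^k} I_{E_{y_j}\Delta E_y}(x)\,\abs{\langle\psi(x),z\rangle}\,dx .
\]
Off the Lebesgue-null set $\bigcup_{i=1}^k\{x:x_i=y_i\}$ one has $I_{E_{y_j}\Delta E_y}(x)\to 0$ as $y_j\to y$, and the integrand is dominated by $\abs{\langle\psi(\cdot),z\rangle}\in L^1(\R^k)$, so the right-hand side tends to $0$ by dominated convergence even though the symmetric difference has infinite measure. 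This gives continuity into $(\mathfrak{Z}',\sigma(\mathfrak{Z}',\mathfrak{Z}))$ directly. The genuine obstacle is therefore not the one you identified (making the slab bound finite by truncation) but the fact that the symmetric difference of two corner sets is never of finite measure in dimension $\ge 2$, which forces one to exploit the integrability of $\langle\psi(\cdot),z\rangle$ rather than the smallness of $\mu(E\Delta\tilde E)$.
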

\begin{proof}
Fix $y\in \R^k$ and let $(y_j)_j\subset\R^k$ be an arbitrary sequence converging to $y$. Then,  
it holds
$$
d_{Leb}([E_y],[E_{y_j}])= Leb(E_y\Delta E_{y_j})\to 0
$$
as $j\to \infty$, and hence the result follows.
\end{proof}

Finally, let us treat the special case in which $\mathfrak{Z}$, in addition to be a Fr\'echet space, is assumed to be a Montel space. In view of the facts stated in Section \ref{sec: rapid excursion}, the previous results may upgraded to 
\begin{proposition}\label{prop: strong continuity wrt FN metric}
Assume additionally that $\mathfrak{Z}$ is Montel. Then the map $[E]\mapsto\int_E\psi(x)\,\mu(dx)$ is continuous from $(\mathcal{M}/{\sim},d_{\mu})$ into
$(\mathfrak{Z}',\beta(\mathfrak{Z}',\mathfrak{Z}))$. 
\end{proposition}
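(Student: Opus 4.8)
The plan is to upgrade Proposition \ref{prop: weak star continuity wrt FN metric} from weak star to strong continuity by exploiting the Montel hypothesis, which (as recalled in Section \ref{sec: rapid excursion}) forces weak and strong convergence of sequences in $\mathfrak{Z}'$ to coincide. Since $(\mathcal{M}/{\sim},d_\mu)$ is a metric space, by fact (2) on continuity in metric spaces it suffices to prove sequential continuity: given $[E_j]\to[E]$ in $d_\mu$, i.e. $\mu(E_j\Delta E)\to 0$, we must show $\int_{E_j}\psi\,d\mu\to\int_E\psi\,d\mu$ with respect to $\beta(\mathfrak{Z}',\mathfrak{Z})$.

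First I would observe that Proposition \ref{prop: weak star continuity wrt FN metric} already gives $\int_{E_j}\psi\,d\mu\to\int_E\psi\,d\mu$ with respect to $\sigma(\mathfrak{Z}',\mathfrak{Z})$, since the map is weak star continuous and $(\mathcal{M}/{\sim},d_\mu)$ is metric, so weak star sequential continuity holds. Equivalently, writing $\Gamma_j:=\Lambda_{E_j}-\Lambda_E\in\mathfrak{Z}'$ (this is exactly the element $\Gamma_{E_j,E}$ of Lemma \ref{lemma: fundamental inequality}), we have $\Gamma_j\to 0$ in $\sigma(\mathfrak{Z}',\mathfrak{Z})$. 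Now I invoke the equivalence of weak and strong sequential convergence in $\mathfrak{Z}'$ valid when $\mathfrak{Z}$ is a Montel (hence separable metrisable, hence separable) Fr\'echet space, as recorded at the end of Section \ref{sec: rapid excursion}: a sequence in $\mathfrak{Z}'$ converges to $0$ for $\sigma(\mathfrak{Z}',\mathfrak{Z})$ if and only if it converges to $0$ for $\beta(\mathfrak{Z}',\mathfrak{Z})$. Applying this to $(\Gamma_j)_j$ yields $\Gamma_j\to 0$ strongly, i.e. $\int_{E_j}\psi\,d\mu\to\int_E\psi\,d\mu$ in $(\mathfrak{Z}',\beta(\mathfrak{Z}',\mathfrak{Z}))$, and sequential continuity — hence continuity, by metrizability of the domain — follows.

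I expect the only real subtlety to be the bookkeeping of why the cited general facts apply: one must note that a Fr\'echet Montel space is metrisable, hence by fact (2) on page 194 of \cite{Schaefer} separable, so fact (1) on that page applies and delivers the weak-strong equivalence for sequences; all of this is already assembled in Section \ref{sec: rapid excursion}, so the proof is genuinely short. A secondary point worth a sentence is that the argument is purely sequential, which is legitimate precisely because the source space $(\mathcal{M}/{\sim},d_\mu)$ is metric — on a non-metrizable domain one could not reduce strong continuity to convergence of sequences in this way. There is no hard analytic estimate here: the content was already in the fundamental inequality \eqref{eq: fundamental inequality} and its consequence Proposition \ref{prop: weak star continuity wrt FN metric}; the present statement is a topological refinement that rides entirely on the Montel property.

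\begin{proof}
Since the domain $(\mathcal{M}/{\sim},d_{\mu})$ is a metric space, by fact (2) in Section \ref{sec: rapid excursion} it suffices to establish sequential continuity. Let $[E_j]\to[E]$ in $(\mathcal{M}/{\sim},d_{\mu})$, that is $\mu(E_j\Delta E)\to 0$ as $j\to\infty$, and set $\Gamma_j:=\Lambda_{E_j}-\Lambda_E\in\mathfrak{Z}'$. By Proposition \ref{prop: weak star continuity wrt FN metric}, the map $[F]\mapsto\int_F\psi(x)\,\mu(dx)$ is continuous into $(\mathfrak{Z}',\sigma(\mathfrak{Z}',\mathfrak{Z}))$; as the domain is metric, this continuity is sequential, so $\Gamma_j\to 0$ with respect to $\sigma(\mathfrak{Z}',\mathfrak{Z})$. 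Now, $\mathfrak{Z}$ is a Fr\'echet Montel space, hence metrisable and Montel, hence separable by fact (2) on page 194 of \cite{Schaefer}; consequently fact (1) on the same page applies and gives the equivalence of weak and strong convergence of sequences in $\mathfrak{Z}'$, i.e. a sequence in $\mathfrak{Z}'$ converges to $0$ for $\sigma(\mathfrak{Z}',\mathfrak{Z})$ if and only if it converges to $0$ for $\beta(\mathfrak{Z}',\mathfrak{Z})$. Applying this to $(\Gamma_j)_j$ we obtain $\Gamma_j\to 0$ with respect to $\beta(\mathfrak{Z}',\mathfrak{Z})$, that is
$$
\int_{E_j}\psi(x)\,\mu(dx)\longrightarrow\int_{E}\psi(x)\,\mu(dx)\quad\text{in }(\mathfrak{Z}',\beta(\mathfrak{Z}',\mathfrak{Z}))\,.
$$
Since $(\mathcal{M}/{\sim},d_{\mu})$ is metric, sequential continuity is equivalent to continuity, and the proof is complete.
\end{proof}
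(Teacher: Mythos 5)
Your proof is correct and follows essentially the same route as the paper: reduce to sequential continuity via metrizability of $(\mathcal{M}/{\sim},d_\mu)$, obtain weak star convergence from Proposition \ref{prop: weak star continuity wrt FN metric}, and upgrade to strong convergence using the weak-strong equivalence of sequential convergence in the dual of a Fr\'echet Montel space recalled in Section \ref{sec: rapid excursion}. Your version merely spells out in more detail why the two cited facts from Schaefer combine to give that equivalence, which the paper leaves implicit.
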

\begin{proof}
Fix $[E]\in \mathcal{M}/{\sim}$ and let $([E_n])_n\subset \mathcal{M}/{\sim}$ be an arbitrary sequence converging to $[E]$ with respect to $d_\mu$. Then, Proposition \ref{prop: weak star continuity wrt FN metric} guarantees that
\begin{equation*}
    \int_{E_n} \psi(x)\,\mu(dx) \to  \int_{E} \psi(x)\,\mu(dx)\,,\quad \text{with respect to } 
    \sigma(\mathfrak{Z}',\mathfrak{Z})
\end{equation*}
and hence, in view of the discussion in Section \ref{sec: rapid excursion}, with respect to $\beta(\mathfrak{Z}',\mathfrak{Z}))$ as well, namely we have sequential continuity. This is sufficient to conclude, because $(\mathcal{M}/{\sim},d_{\mu})$ is a metric space.
\end{proof}
Analogously, we have the specific case,
\begin{corollary}\label{corr: strong continuity wrt FN metric}
Let  $(M,\mathcal{M},\mu)=(\R^k,\mathcal{B}^k,Leb)$, where $Leb$ is the $k$-dimensional Lebesgue measure and $\mathcal{B}^k$ the Borel $\sigma$-algebra on $\mathbb R^k$. Assume additionally that $\mathfrak{Z}$ is Montel. Then the map
$$
y\mapsto\int_{E_y}\psi(x)\,dx
$$
is continuous from $\R^k$ into $(\mathfrak{Z}',\beta(\mathfrak{Z}',\mathfrak{Z}))$. 
\end{corollary}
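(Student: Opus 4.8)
The plan is to bootstrap from the weak-star statement already obtained in Corollary \ref{cor:integral-limit-continuity}, exploiting the special features of Montel spaces recalled in Section \ref{sec: rapid excursion}. Concretely, I would fix $y\in\R^k$, take an arbitrary sequence $(y_j)_j\subset\R^k$ with $y_j\to y$, and apply Corollary \ref{cor:integral-limit-continuity} to get
$$
\int_{E_{y_j}}\psi(x)\,dx \longrightarrow \int_{E_y}\psi(x)\,dx \qquad\text{with respect to }\sigma(\mathfrak{Z}',\mathfrak{Z}).
$$
(Equivalently, one can invoke Proposition \ref{prop: strong continuity wrt FN metric} directly together with the observation, made in the proof of Corollary \ref{cor:integral-limit-continuity}, that $d_{Leb}([E_{y_j}],[E_y])\to 0$.)

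Next I would upgrade this weak-star convergence to strong convergence. Since $\mathfrak{Z}$ is assumed to be a Montel Fr\'echet space, the two general facts quoted from \cite[page 194]{Schaefer} in Section \ref{sec: rapid excursion} guarantee that a sequence in $\mathfrak{Z}'$ converges with respect to $\sigma(\mathfrak{Z}',\mathfrak{Z})$ if and only if it converges with respect to $\beta(\mathfrak{Z}',\mathfrak{Z})$. Applying this to the sequence above yields convergence of $\int_{E_{y_j}}\psi(x)\,dx$ to $\int_{E_y}\psi(x)\,dx$ in the strong topology $\beta(\mathfrak{Z}',\mathfrak{Z})$.

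Finally, this shows that $y\mapsto\int_{E_y}\psi(x)\,dx$ is sequentially continuous from $\R^k$ into $(\mathfrak{Z}',\beta(\mathfrak{Z}',\mathfrak{Z}))$; since $\R^k$ is metrizable, sequential continuity is equivalent to continuity (fact (2) in Section \ref{sec: rapid excursion}), which finishes the argument.

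I do not expect a genuine obstacle here: the statement is a routine combination of Corollary \ref{cor:integral-limit-continuity} (or of Proposition \ref{prop: strong continuity wrt FN metric} specialized to the half-infinite boxes $E_y$) with the Montel-space equivalence of weak and strong sequential convergence. The only point that needs a moment's attention is that this equivalence is stated for \emph{sequences}, not general nets, so one should argue along sequences and then use metrizability of the source space $\R^k$ to conclude continuity — exactly as in the proof of Corollary \ref{cor:integral-limit-continuity}.
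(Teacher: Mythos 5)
Your argument is correct and is exactly the one the paper intends: the corollary is stated with the remark ``Analogously,'' referring to the proof of Proposition \ref{prop: strong continuity wrt FN metric}, which likewise passes from weak-star convergence (via $d_{Leb}([E_{y_j}],[E_y])\to 0$ and Proposition \ref{prop: weak star continuity wrt FN metric}) to strong convergence using the Montel equivalence of weak and strong sequential convergence, and then concludes by metrizability of the source. Nothing is missing.
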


\subsection{Properties of the Gelfand integral}

In this subsection we show a collection of some important results on the Gelfand integral, including a Fubini theorem, convergence theorems and linear transformations on the integral.  

First, let us show a Fubini-Tonelli type theorem for the integral.

\begin{theorem}
\label{thm:fubini}
Let $\mathfrak{Z}$ be an $\mathbb{F}$-Fr\'echet space and $(M_i,\mathcal{M}_i,\mu_i),i=1,2$ be two arbitrary $\sigma$-finite measure spaces. Set $(M,\mathcal{M},\mu):=(M_1\times M_2,\mathcal{M}_1\otimes\mathcal{M}_2 ,\mu_1\otimes \mu_2)$. Consider a mapping  
\[
\psi: M \to \mathfrak{Z}'
\]
that is $\mathfrak{Z}'$-Gelfand $\mu$-integrable. 

Then,
\begin{equation*}
    \begin{split}
    \int_{M_1}\int_{M_2} \langle\psi(x_1,x_2),\cdot\rangle \mu_2(dx_2) \mu_1(dx_1)  & = \int_M   \psi(x) \,\mu(dx)  \\
    &= \int_{M_2}\int_{M_1} \langle\psi(x_1,x_2),\cdot\rangle \mu_1(dx_1) \mu_2(dx_2)
    \end{split}
\end{equation*}
as elements of $\mathfrak{Z}'$.
\end{theorem}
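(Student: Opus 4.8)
The plan is to reduce the whole statement to the classical scalar-valued Fubini--Tonelli theorem by testing against an arbitrary $z\in\mathfrak{Z}$. First I would fix $z\in\mathfrak{Z}$ and note that, by the very definition of $\mathfrak{Z}'$-Gelfand $\mu$-integrability, the function $x=(x_1,x_2)\mapsto\langle\psi(x_1,x_2),z\rangle$ lies in $L^1(\mu)=L^1(\mu_1\otimes\mu_2)$. The classical Fubini theorem (e.g.\ Folland) then yields three facts at once: the map $x_1\mapsto\int_{M_2}\langle\psi(x_1,x_2),z\rangle\,\mu_2(dx_2)$ is well defined for $\mu_1$-a.e.\ $x_1$ and belongs to $L^1(\mu_1)$, the symmetric statement holds with the indices exchanged, and
\begin{equation*}
\int_M\langle\psi(x),z\rangle\,\mu(dx)=\int_{M_1}\int_{M_2}\langle\psi(x_1,x_2),z\rangle\,\mu_2(dx_2)\,\mu_1(dx_1)=\int_{M_2}\int_{M_1}\langle\psi(x_1,x_2),z\rangle\,\mu_1(dx_1)\,\mu_2(dx_2).
\end{equation*}
In particular, each of the two iterated integrals in the statement, read as a function of $z$ (i.e.\ $z\mapsto\int_{M_1}\int_{M_2}\langle\psi(x_1,x_2),z\rangle\,\mu_2(dx_2)\,\mu_1(dx_1)$ and symmetrically), is a well-defined linear functional on $\mathfrak{Z}$.

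Next I would invoke Proposition \ref{prop: the integral}, which applies precisely because $\psi$ is $\mathfrak{Z}'$-Gelfand $\mu$-integrable: the functional $z\mapsto\int_M\langle\psi(x),z\rangle\,\mu(dx)$ is by definition the element $\int_M\psi(x)\,\mu(dx)\in\mathfrak{Z}'$. Combining this with the displayed chain of equalities shows that the two iterated-integral functionals take, for every $z\in\mathfrak{Z}$, the same value as $\langle\int_M\psi(x)\,\mu(dx),z\rangle$. Hence all three functionals coincide on $\mathfrak{Z}$; in particular the two iterated ones belong to $\mathfrak{Z}'$ and both equal $\int_M\psi(x)\,\mu(dx)$, which is exactly the assertion.

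The only genuinely subtle point — and the reason no integrability hypotheses on the partial functions or partial integrals are needed, in contrast with a naive formulation — is that the $\mu_1$-null set on which $x_1\mapsto\int_{M_2}\langle\psi(x_1,x_2),z\rangle\,\mu_2(dx_2)$ may fail to be defined \emph{depends on $z$}. This is what prevents us from asserting that, for a fixed $x_1$, the section $\psi(x_1,\cdot)$ is $\mathfrak{Z}'$-Gelfand $\mu_2$-integrable, hence from forming an ``inner'' Gelfand integral $\int_{M_2}\psi(x_1,x_2)\,\mu_2(dx_2)$ as a bona fide element of $\mathfrak{Z}'$; this is why the statement keeps the pairing $\langle\psi(x_1,x_2),\cdot\rangle$ inside the iterated integral. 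At that scalar level the $z$-dependence of the exceptional set is harmless, being absorbed by the classical theorem. No further obstacle arises; the rest is a routine transcription.
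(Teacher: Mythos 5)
Your proposal is correct and follows essentially the same route as the paper's own proof: fix $z\in\mathfrak{Z}$, apply the classical Fubini--Tonelli theorem to $\langle\psi(\cdot),z\rangle\in L^1(\mu_1\otimes\mu_2)$, identify all three expressions with $\langle\int_M\psi(x)\,\mu(dx),z\rangle$, and conclude equality in $\mathfrak{Z}'$ since the latter is already known to be continuous by Proposition \ref{prop: the integral}. Your closing observation about the $z$-dependence of the exceptional null set is precisely the point the authors single out in the remark following the theorem.
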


\begin{proof}
Fix once for all $z\in\mathfrak{Z}$. By the ``classical'' Fubini-Tonelli Theorem there exists $F_1(z)\in \mathcal{M}_1$ such that $\mu_1(M_1\setminus F_1(z))=0$, for any $x_1\in F_1(z)$ 
\begin{equation*}
    \langle\psi(x_1,\cdot),z\rangle \in L^1(\mu_2),
\end{equation*}
and the $\mu_1$-a.e. defined function 
\begin{equation*}
    x_1\mapsto \int_{M_2} \langle\psi(x_1,x_2),z\rangle \mu_2(dx_2) 
\end{equation*}
is an element of $L^1(\mu_1)$. 

Thus, again in force of the "classical" Fubini-Tonelli Theorem, and directly from the definition of the Gelfand integral,
\begin{equation*}
    \begin{split}
    \int_{M_1}\int_{M_2} \langle\psi(x_1,x_2),z\rangle \mu_2(dx_2) \mu_1(dx_1) &=
    \int_{M} \langle\psi(x),z\rangle \mu(dx)\\
    & =\left\langle \int_M   \psi(x) \,\mu(dx),z  \right\rangle.
    \end{split}
\end{equation*}
By "symmetry", we must also have
\begin{equation*}
    \begin{split}
    \int_{M_2}\int_{M_1} \langle\psi(x_1,x_2),z\rangle \mu_1(dx_1) \mu_2(dx_2) &=
    \int_{M} \langle\psi(x),z\rangle \mu(dx)\\
    & =\left\langle \int_M   \psi(x) \,\mu(dx),z  \right\rangle .
    \end{split}
\end{equation*}
This holds for any $z\in\mathfrak{Z}$. But $\int_M   \psi(x) \,\mu(dx)$ is an element of $\mathfrak{Z}'$. It follows that also the other terms belong to $\mathfrak{Z}'$, and thus we conclude (equality in $\mathfrak{Z}'$)
\begin{equation*}
    \begin{split}
    \int_{M_1}\int_{M_2} \langle\psi(x_1,x_2),\cdot\rangle \mu_2(dx_2) \mu_1(dx_1)  & = \int_M   \psi(x) \,\mu(dx)  \\
    &= \int_{M_2}\int_{M_1} \langle\psi(x_1,x_2),\cdot\rangle \mu_1(dx_1) \mu_2(dx_2).
    \end{split}
\end{equation*}
The Theorem is proved.
\end{proof}

In view of this last result, it makes sense to define ($z\in\mathfrak{Z}$)
\begin{equation*}
    \left\langle \int_{M_1}\int_{M_2} \psi(x_1,x_2)\, \mu_2(dx_2) \mu_1(dx_1),z\right\rangle := \int_{M_1}\int_{M_2} \langle\psi(x_1,x_2),z\rangle \mu_2(dx_2) \mu_1(dx_1)
\end{equation*}
and 
\begin{equation*}
    \left\langle \int_{M_2}\int_{M_1} \psi(x_1,x_2)\, \mu_1(dx_1) \mu_2(dx_2),z\right\rangle := \int_{M_2}\int_{M_1} \langle\psi(x_1,x_2),z\rangle \mu_1(dx_1) \mu_2(dx_2).
\end{equation*}
Therefore, the previous result may be re-stated in a more compact and familiar way as 
\begin{equation*}
    \begin{split}
     \int_{M_1}\int_{M_2} \psi(x_1,x_2) \mu_2(dx_2) \mu_1(dx_1)  & = \int_M   \psi(x) \,\mu(dx)  \\
    &= \int_{M_2}\int_{M_1} \psi(x_1,x_2) \mu_1(dx_1) \mu_2(dx_2),   
    \end{split}
\end{equation*}
as elements of $\mathfrak{Z}'$.

\begin{remark}
In the proof of Thm. \ref{thm:fubini} it is worth noticing the remarkable feature that the $z$-dependency subsets $F_1(z)$ does not become a matter of concern in the end. By the interpretation of the double integrals as the $\mathfrak Z'$-elements 
$$
z\mapsto\int_{M_1}\int_{M_2}\langle\psi(x_1,x_2),z\rangle\mu_2(dx_2)\mu_1(dx_1)
$$
(and similarly for the commuted integral), we see that we cannot "move" the $z$ through the two integrals iteratively, as we might hit outside $F_1(z)$. 
\end{remark}

We next move our attention to convergence theorems for the integral. First we show a Vitali convergence theorem.

\begin{theorem}\label{thm: Vitali}
Let $(M,\mathcal{M},\mu)$ be an arbitrary $\sigma$-finite measure space, and let $\mathfrak{Z}$ be an $\mathbb{F}$-Fr\'echet space. Consider a sequence of functions $\{\psi_k: M \to \mathfrak{Z}'\}_{k\in \N}$  and  $\psi: M \to \mathfrak{Z}'$
such that
\begin{enumerate}
    \item[(i)] for any $k\in \N$ the function $\psi_k$ is $\mathfrak{Z}'$-Gelfand $\mu$-integrable;
    \item[(ii)] for any $z\in \mathfrak{Z}$ the function $x\mapsto \langle\psi(x),z \rangle$ is $\mu$-measurable; 
    \item[(iii)] for $\mu$-a.e. $x\in M$, $\psi_k(x)\to \psi(x)$ in $(\mathfrak{Z}',\sigma(\mathfrak{Z}',\mathfrak{Z}))$ as $k\to \infty$;
    \item[(iv)] for any $z\in \mathfrak{Z}$ and $\varepsilon>0$ there exists $E_{\varepsilon,z}\in \mathcal{M}$ with $\mu(E_{\varepsilon,z})<\infty$ such that for all $G\in\mathcal{M}$ with $G\cap E_{\varepsilon,z}=\emptyset$ it holds
    \begin{equation}\label{eq: Vitali 1}
      \sup_k \int_G \abs{\langle\psi_k(x),z\rangle} \mu(dx) < \varepsilon\,;
    \end{equation}
    \item[(v)] for any $z\in \mathfrak{Z}$ and $\varepsilon>0$ there exists $\delta_{\varepsilon,z}>0$ such that for all $E\in\mathcal{M}$ with $\mu(E)<\delta_{\varepsilon,z}$ it holds
    \begin{equation}\label{eq: Vitali 2}
      \sup_k \int_E \abs{\langle\psi_k(x),z\rangle} \mu(dx) < \varepsilon\,.
    \end{equation}
\end{enumerate}
Then $\psi$ is $\mathfrak{Z}'$-Gelfand $\mu$-integrable, and as $k\to \infty$
\begin{equation*}
    \int_M \psi_k(x) \mu(dx) \to \int_M \psi(x) \mu(dx) \quad \text{in } (\mathfrak{Z}',\sigma(\mathfrak{Z}',\mathfrak{Z}))\,.
\end{equation*}
\end{theorem}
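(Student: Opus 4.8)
\emph{Proof plan.} The plan is to peel the statement apart one functional $z\in\mathfrak{Z}$ at a time and reduce it to the classical scalar Vitali convergence theorem, and then to recover the $\sigma(\mathfrak{Z}',\mathfrak{Z})$-convergence directly from the definition of that topology. The key point is that, with $z$ fixed, hypotheses (iv) and (v) are exactly the ``tightness'' and ``uniform integrability'' (uniform absolute continuity) conditions of Vitali's theorem applied to the scalar sequence $\{\langle\psi_k(\cdot),z\rangle\}_k\subset L^1(\mu)$; moreover, since convergence of a sequence in the weak-star topology $\sigma(\mathfrak{Z}',\mathfrak{Z})$ is by definition convergence after pairing against each fixed element of $\mathfrak{Z}$, the single $\mu$-null set furnished by (iii) serves simultaneously for every $z$.

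First I would fix $z\in\mathfrak{Z}$ and set $f_k:=\langle\psi_k(\cdot),z\rangle$ and $f:=\langle\psi(\cdot),z\rangle$. By (i) each $f_k\in L^1(\mu)$; by (ii) $f$ is $\mu$-measurable; by (iii) $f_k\to f$ $\mu$-a.e., and $f$ is everywhere finite since $\psi(x)\in\mathfrak{Z}'$ is scalar-valued at each $x$. Choosing $G=M\setminus E_{\varepsilon,z}$ in (iv) gives tightness of $\{f_k\}_k$, while (v) is precisely uniform integrability of $\{f_k\}_k$. Then the classical Vitali convergence theorem (in the form valid on $\sigma$-finite measure spaces) applies and yields $f\in L^1(\mu)$ together with $\Vert f_k-f\Vert_{L^1(\mu)}\to 0$; in particular $\int_M f_k\,\mu(dx)\to\int_M f\,\mu(dx)$ as $k\to\infty$.

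Since $z\in\mathfrak{Z}$ was arbitrary and $\langle\psi(\cdot),z\rangle=f\in L^1(\mu)$ for every $z$, the map $\psi$ is $\mathfrak{Z}'$-Gelfand $\mu$-integrable by definition, so Proposition \ref{prop: the integral} tells us that $\int_M\psi(x)\,\mu(dx)$ is a well-defined element of $\mathfrak{Z}'$ with $\big\langle\int_M\psi(x)\,\mu(dx),z\big\rangle=\int_M f\,\mu(dx)$. Combining this with the previous step, for every $z\in\mathfrak{Z}$ one obtains
\[
\Big\langle\int_M\psi_k(x)\,\mu(dx),z\Big\rangle=\int_M f_k\,\mu(dx)\longrightarrow\int_M f\,\mu(dx)=\Big\langle\int_M\psi(x)\,\mu(dx),z\Big\rangle,
\]
and by the very definition of the topology $\sigma(\mathfrak{Z}',\mathfrak{Z})$ this says exactly that $\int_M\psi_k(x)\,\mu(dx)\to\int_M\psi(x)\,\mu(dx)$ weak star, which is the claim.

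The argument is essentially bookkeeping, and I do not anticipate a serious obstacle. The only point genuinely deserving care is to pin down the precise statement of the scalar Vitali theorem one invokes and to verify that (iv)--(v) match its hypotheses verbatim — in particular that no separate uniform $L^1$-bound on $\{f_k\}_k$ need be assumed, tightness plus uniform absolute continuity plus a.e.\ convergence to a finite measurable limit already sufficing on a $\sigma$-finite space; a second, very minor point is that hypothesis (ii) is exactly what is needed to guarantee measurability of the limit $f$ without imposing completeness on $\mu$.
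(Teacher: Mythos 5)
Your proposal is correct and follows essentially the same route as the paper: fix $z\in\mathfrak{Z}$, observe that (iv) and (v) are exactly tightness and uniform absolute continuity for the scalar sequence $\langle\psi_k(\cdot),z\rangle$, conclude $L^1(\mu)$-convergence to $\langle\psi(\cdot),z\rangle$, and then read off weak-star convergence from the definition of $\sigma(\mathfrak{Z}',\mathfrak{Z})$ together with Proposition~\ref{prop: the integral}. The only difference is presentational: the paper proves the scalar Vitali step by hand (Fatou's lemma plus Severini--Egoroff on the set $E_{\varepsilon,z}$) rather than citing the classical theorem, which is precisely the ``point deserving care'' you flag at the end.
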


\begin{proof}
Fix $z\in \mathfrak{Z}$. By Fatou's Lemma (see Folland \cite{Folland}),
\begin{equation*}
    \int_M \abs{\langle\psi(x),z\rangle} \mu(dx) \leq \liminf_k  \int_M \abs{\langle\psi_k(x),z\rangle} \mu(dx) \leq \infty\,.
\end{equation*}
Let $\varepsilon = 1$. Then, according to \eqref{eq: Vitali 1}, there exists $E_{1,z}\in \mathcal{M}$ with $\mu(E_{1,z})<\infty$ such that
\begin{equation}\label{eq: Vitali 3}
    \sup_k \int_{M\setminus E_{1,z}} \abs{\langle\psi_k(x),z\rangle} \mu(dx) < 1\,.
\end{equation}
Let $\delta_{1,z}>0$ be given by \eqref{eq: Vitali 2}. Then, by the Severini-Egoroff Theorem (see Folland \cite{Folland}) there exists a measurable subset $R_{1,z}\subset E_{1,z}$ such that $\mu(R_{1,z})<\delta_{1,z}$ and
\begin{equation}\label{eq: Vitali 4}
    \langle \psi_k(x),z\rangle \to \langle \psi(x),z\rangle
\end{equation}
uniformly on $E_{1,z}\setminus R_{1,z}$. Furthermore, by \eqref{eq: Vitali 2}, we have
\begin{equation*}
    \sup_k \int_{R_{1,z}} \abs{\langle\psi_k(x),z\rangle} \mu(dx) < 1\,.
\end{equation*}
This fact joined with \eqref{eq: Vitali 3} and \eqref{eq: Vitali 4} leads us to, for some suitable $k_z\in \N$ and all $k>k_z$ 
\begin{equation*}
    \begin{split}
       \int_M \abs{\langle\psi_k(x),z\rangle} \mu(dx)
       &\leq\int_{M\setminus E_{1,z}}\abs{\langle\psi_k(x),z\rangle} \mu(dx)+\int_{E_{1,z}\setminus R_{1,z}}\abs{\langle\psi_k(x),z\rangle} \mu(dx) \\
       &\qquad+\int_{R_{1,z}}\abs{\langle\psi_k(x),z\rangle} \mu(dx)\\ 
       & <  2  + \int_{E_{1,z}\setminus R_{1,z}} \abs{\langle\psi_k(x),z\rangle} \mu(dx) \\
       & <  2  + \int_{E_{1,z}\setminus R_{1,z}} \abs{\langle\psi_k(x) - \psi_{k_z}(x),z\rangle} \mu(dx) \\
       & \qquad+ \int_{E_{1,z}\setminus R_{1,z}} \abs{\langle\psi_{k_z}(x),z\rangle} \mu(dx)\\
       & <  3 + \int_{E_{1,z}\setminus R_{1,z}} \abs{\langle\psi_{k_z}(x),z\rangle} \mu(dx) \\
       & <  3 + \int_M \abs{\langle\psi_{k_z}(x),z\rangle} \mu(dx)\\
       & < \infty\,.       
    \end{split}
\end{equation*}
Therefore, $\liminf_k  \int_M \abs{\langle\psi_k(x),z\rangle} \mu(dx) < \infty$ and hence $\langle\psi(\cdot),z \rangle$ is an element of $L^1(\mu)$. 

In view of this, it is evident that the ``enlarged'' sequence $\{\langle\psi_k(\cdot),z\rangle\}_{k\in\N}\cup \{\langle\psi(\cdot),z\rangle\}$ still satisfies properties \eqref{eq: Vitali 1} and \eqref{eq: Vitali 2} with possibly different $E_{\varepsilon,z}$ and $\delta_{\varepsilon,z}$. Therefore, for $\varepsilon>0$ and arguing as we have done above, it holds
\begin{equation*}
    \begin{split}
       \int_M \abs{\langle\psi_k(x),z\rangle - \langle\psi(x),z\rangle} \mu(dx) 
       & =  
       \int_{M\setminus E_{\varepsilon,z}} \abs{\langle\psi_k(x),z\rangle - \langle\psi(x),z\rangle} \mu(dx) \\
       & \qquad + \int_{ E_{\varepsilon,z}} \abs{\langle\psi_k(x),z\rangle - \langle\psi(x),z\rangle} \mu(dx) \\
       & <  2\varepsilon  + \int_{R_{\varepsilon,z}} \abs{\langle\psi_k(x) - \psi(x),z\rangle} \mu(dx) \\
       & \qquad + \int_{E_{\varepsilon,z}\setminus R_{\varepsilon,z}} \abs{\langle\psi_k(x) - \psi(x),z\rangle} \mu(dx)\\
       & <  4\varepsilon + \int_{E_{\varepsilon,z}\setminus R_{\varepsilon,z}} \abs{\langle\psi_k(x) - \psi(x),z\rangle} \mu(dx)\,.       
    \end{split}
\end{equation*}
By uniform convergence, sending first $k\to\infty$ and then $\varepsilon\to 0$, we obtain that $\lim_k \int_M \abs{\langle\psi_k(x),z\rangle - \langle\psi(x),z\rangle} \mu(dx)=0$ for arbitrary $z\in \mathfrak{Z}$. By Proposition \ref{prop: the integral}, we conclude  
\begin{equation*}
    \int_M \psi_k(x) \mu(dx) \to \int_M \psi(x) \mu(dx) \quad \text{in } (\mathfrak{Z}',\sigma(\mathfrak{Z}',\mathfrak{Z}))\,.
\end{equation*}
The claim follows.
\end{proof}
Notice that we are not requiring any integrability properties for the limiting integrand $\psi$ in the above Vitali convergence theorem. The integrability of $\psi$ follows directly from the proof, as we have seen.

From the above convergence theorem, it is immediate to obtain the following dominated convergence theorem:

\begin{theorem}\label{thm: dom}
Let $(M,\mathcal{M},\mu)$ be an arbitrary $\sigma$-finite measure space, and let $\mathfrak{Z}$ be an $\mathbb{F}$-Fr\'echet space. Consider a sequence of functions $\{\psi_k: M \to \mathfrak{Z}'\}_{k\in \N}$  and  $\psi: M \to \mathfrak{Z}'$
such that
\begin{enumerate}
    \item[(i)] for any $z\in \mathfrak{Z}$ and $k\in \N$ the function $x\mapsto \langle\psi_k(x),z \rangle$ is $\mu$-measurable;
    \item[(ii)] for any $z\in \mathfrak{Z}$ the function $x\mapsto \langle\psi(x),z \rangle$ is $\mu$-measurable; 
    \item[(iii)] for $\mu$-a.e. $x\in M$, $\psi_k(x)\to \psi(x)$ in $(\mathfrak{Z}',\sigma(\mathfrak{Z}',\mathfrak{Z}))$ as $k\to \infty$;
    \item[(iv)] for any $z\in \mathfrak{Z}$ there exists $0\leq g^z\in L^1(\mu)$ and $N_z\in \mathcal{M}$ with 
    $\mu(N_z)=0$ and
    \begin{equation*}
        \abs{\langle\psi_k(x),z\rangle} \leq g^z(x)\,, \quad \text{for } x\notin N_z \text{ and } k\in\N\,.
    \end{equation*}
\end{enumerate}
Then $\psi$ is $\mathfrak{Z}'$-Gelfand $\mu$-integrable, and as $k\to \infty$
\begin{equation*}
    \int_M \psi_k(x) \mu(dx) \to \int_M \psi(x) \mu(dx) \quad \text{in } (\mathfrak{Z}',\sigma(\mathfrak{Z}',\mathfrak{Z}))\,.
\end{equation*}
\end{theorem}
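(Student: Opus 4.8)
The plan is to obtain Theorem~\ref{thm: dom} as a direct consequence of the Vitali convergence theorem, Theorem~\ref{thm: Vitali}: all that is needed is to verify, for the scalar functions $\langle\psi_k(\cdot),z\rangle$, the five hypotheses of that theorem, the domination assumption (iv) being precisely what renders the ``uniform integrability'' conditions \eqref{eq: Vitali 1} and \eqref{eq: Vitali 2} automatic. So I would fix $z\in\mathfrak{Z}$ once and for all and let $0\le g^z\in L^1(\mu)$ and the $\mu$-null set $N_z$ be as in hypothesis (iv).

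First I would check hypothesis (i) of Theorem~\ref{thm: Vitali}, namely that each $\psi_k$ is $\mathfrak{Z}'$-Gelfand $\mu$-integrable. By assumption (i) the map $x\mapsto\langle\psi_k(x),z\rangle$ is $\mu$-measurable, and by assumption (iv) its absolute value is bounded by $g^z$ off the $\mu$-null set $N_z$; hence $\langle\psi_k(\cdot),z\rangle\in L^1(\mu)$ for every $z\in\mathfrak{Z}$, which is exactly $\mathfrak{Z}'$-Gelfand $\mu$-integrability of $\psi_k$. Hypotheses (ii) and (iii) of Theorem~\ref{thm: Vitali} coincide verbatim with (ii) and (iii) of the present statement, so nothing is to be done there.

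It then remains to produce the sets and thresholds occurring in \eqref{eq: Vitali 1}--\eqref{eq: Vitali 2}. For \eqref{eq: Vitali 2} I would invoke the absolute continuity of the Lebesgue integral applied to $g^z\in L^1(\mu)$: given $\varepsilon>0$ there is $\delta_{\varepsilon,z}>0$ such that $\mu(E)<\delta_{\varepsilon,z}$ forces $\int_E g^z\,\mu(dx)<\varepsilon$, and then $\int_E\abs{\langle\psi_k(x),z\rangle}\,\mu(dx)\le\int_{E\setminus N_z}g^z\,\mu(dx)<\varepsilon$ uniformly in $k$. For \eqref{eq: Vitali 1} I would use the $\sigma$-finiteness of $\mu$: write $M=\bigcup_n M_n$ with $M_n\uparrow M$ and $\mu(M_n)<\infty$; by monotone convergence $\int_{M_n}g^z\,\mu(dx)\uparrow\int_M g^z\,\mu(dx)<\infty$, so for $n$ large enough $\int_{M\setminus M_n}g^z\,\mu(dx)<\varepsilon$, and one may take $E_{\varepsilon,z}:=M_n$, which has finite measure. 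Indeed, any $G\in\mathcal{M}$ with $G\cap E_{\varepsilon,z}=\emptyset$ satisfies $G\subset M\setminus M_n$, whence $\sup_k\int_G\abs{\langle\psi_k(x),z\rangle}\,\mu(dx)\le\int_{M\setminus M_n}g^z\,\mu(dx)<\varepsilon$.

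With all five hypotheses verified, Theorem~\ref{thm: Vitali} yields at once that $\psi$ is $\mathfrak{Z}'$-Gelfand $\mu$-integrable and that $\int_M\psi_k(x)\,\mu(dx)\to\int_M\psi(x)\,\mu(dx)$ in $(\mathfrak{Z}',\sigma(\mathfrak{Z}',\mathfrak{Z}))$. I do not expect any genuine obstacle; the only points requiring a little care are the bookkeeping of the $z$-dependent null sets $N_z$ — harmless, since they affect no integral — and the appeal to $\sigma$-finiteness, which is exactly what allows one to locate the finite-measure ``core'' $E_{\varepsilon,z}$ needed in \eqref{eq: Vitali 1}. It is worth emphasizing, consistently with the weak-star nature of the Gelfand integral, that the domination is imposed only separately for each $z\in\mathfrak{Z}$, and no global or uniform bound on the $\psi_k$ in $\mathfrak{Z}'$ is required.
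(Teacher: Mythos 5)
Your proposal is correct and follows exactly the route the paper takes: deduce the theorem from the Vitali convergence theorem by observing that the dominating functions $g^z$ furnish the uniform integrability conditions \eqref{eq: Vitali 1} and \eqref{eq: Vitali 2}. The paper's own proof is a two-line ``clearly''-style remark; your version merely fills in the (correct) details about $\sigma$-finiteness, absolute continuity, and the harmless null sets $N_z$.
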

\begin{proof}
Clearly, for any $z\in \mathfrak{Z}$ the function $g^z$ satisfies the properties in \eqref{eq: Vitali 1} and \eqref{eq: Vitali 2}, and as a consequence the whole sequence $\langle\psi_k(\cdot),z\rangle$ must do the same. Theorem \ref{thm: Vitali} brings the thesis.
\end{proof}

Finally, we observe the our integral is well-behaved under suitable linear and continuous transformations. More precisely, we have 

\begin{proposition}
\label{prop:commute-lin-op}
Let $\psi: M \to \mathfrak{Z}'$ be $\mathfrak{Z}'$-Gelfand $\mu$-integrable. Let 
$$
T: (\mathfrak{Z}',\sigma(\mathfrak{Z}',\mathfrak{Z}))\to (\mathfrak{G}',\sigma(\mathfrak{G}',\mathfrak{G}))
$$ 
be linear and continuous, where $\mathfrak{G}$ is another $\mathbb{F}$-Fr\'echet space. Then $T\psi$ is $\mathfrak{G}'$-Gelfand $\mu$-integrable and, for any $E\in \mathcal{M}$, 
\[
T \int_E \psi(x)\,\mu(dx) =  \int_E  T\psi(x)\,\mu(dx)
\]
in $\mathfrak{G}'$.
\end{proposition}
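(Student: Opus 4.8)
The plan is to reduce everything to the characterization of weak-star continuity recalled in Section~\ref{sec: rapid excursion}: since $T:(\mathfrak{Z}',\sigma(\mathfrak{Z}',\mathfrak{Z}))\to(\mathfrak{G}',\sigma(\mathfrak{G}',\mathfrak{G}))$ is linear and continuous, by \cite[Proposition 2.1, page 128]{Schaefer} its algebraic adjoint $T^\ast:(\mathfrak{G}')^\ast\to(\mathfrak{Z}')^\ast$ satisfies $T^\ast(\mathfrak{G})\subset\mathfrak{Z}$, and
$$
\langle Tz',g\rangle=\langle z',T^\ast g\rangle,\qquad z'\in\mathfrak{Z}',\; g\in\mathfrak{G}.
$$
This inclusion is the only structural ingredient needed; the rest is bookkeeping with the defining property of the Gelfand integral.

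First I would verify that $T\psi$ is $\mathfrak{G}'$-Gelfand $\mu$-integrable. Fix $g\in\mathfrak{G}$. Then $x\mapsto\langle T\psi(x),g\rangle=\langle\psi(x),T^\ast g\rangle$, and since $T^\ast g\in\mathfrak{Z}$, the hypothesis that $\psi$ is $\mathfrak{Z}'$-Gelfand $\mu$-integrable gives $\langle\psi(\cdot),T^\ast g\rangle\in L^1(\mu)$. Hence $\langle T\psi(\cdot),g\rangle\in L^1(\mu)$ for every $g\in\mathfrak{G}$, which is precisely $\mathfrak{G}'$-Gelfand $\mu$-integrability; by Proposition~\ref{prop: the integral} the element $\int_E T\psi(x)\,\mu(dx)\in\mathfrak{G}'$ is well defined for every $E\in\mathcal{M}$.

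Next I would check the commutation identity by testing against an arbitrary $g\in\mathfrak{G}$. Using that $\int_E\psi(x)\,\mu(dx)\in\mathfrak{Z}'$, the adjoint relation above, and the defining property of the Gelfand integral applied twice,
\begin{align*}
\left\langle T\int_E\psi(x)\,\mu(dx),g\right\rangle
&=\left\langle \int_E\psi(x)\,\mu(dx),T^\ast g\right\rangle
=\int_E\langle\psi(x),T^\ast g\rangle\,\mu(dx)\\
&=\int_E\langle T\psi(x),g\rangle\,\mu(dx)
=\left\langle \int_E T\psi(x)\,\mu(dx),g\right\rangle.
\end{align*}
Since $\mathfrak{G}$ separates the points of $\mathfrak{G}'$ (the two spaces are in duality), and both $T\int_E\psi(x)\,\mu(dx)$ and $\int_E T\psi(x)\,\mu(dx)$ lie in $\mathfrak{G}'$, the equality for all $g$ forces $T\int_E\psi(x)\,\mu(dx)=\int_E T\psi(x)\,\mu(dx)$ in $\mathfrak{G}'$, as claimed.

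There is no serious obstacle here; the one point worth emphasizing is that one must route the argument through the algebraic adjoint $T^\ast$ rather than attempting to move $T$ under the integral sign directly — it is exactly the inclusion $T^\ast(\mathfrak{G})\subset\mathfrak{Z}$ that simultaneously makes the integrability of $T\psi$ and the computation above legitimate. One could alternatively try a net/approximation argument exploiting the weak-star continuity of $T$ together with Riemann-type approximations of the integral, but that route would require establishing the existence of suitable approximations and is unnecessary given the adjoint characterization.
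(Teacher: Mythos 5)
Your proposal is correct and follows essentially the same route as the paper's own proof: both invoke the adjoint characterization $T^\ast(\mathfrak{G})\subset\mathfrak{Z}$ from Schaefer to get $\langle T\psi(\cdot),g\rangle=\langle\psi(\cdot),T^\ast g\rangle\in L^1(\mu)$, then establish the identity by the identical four-step chain of dualities tested against arbitrary $g\in\mathfrak{G}$. No gaps.
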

\begin{proof}
In virtue of Proposition \ref{prop: the integral} we can define
\begin{equation*}
    \int_E  \psi(x)\,\mu(dx) \in \mathfrak{Z}'
\end{equation*}
such that for all $z\in\mathfrak{Z}$
\begin{equation*}
    \left\langle\int_E  \psi(x)\,\mu(dx) ,z\right\rangle = \int_E  \langle \psi(x),z\rangle \,\mu(dx) \,.
\end{equation*}

Since $T$ is $\sigma(\mathfrak{Z}',\mathfrak{Z}) - \sigma(\mathfrak{G}',\mathfrak{G})$ continuous, from Section \ref{sec: rapid excursion} we know, upon identifying $\mathfrak{G}$ with a subspace of $(\mathfrak{G'})^\ast$, that $T^\ast(\mathfrak{G})\subset\mathfrak{Z}$. Moreover,
\begin{equation*}
    \langle Tz',g \rangle = \langle z',T^\ast g \rangle, \quad z'\in\mathfrak{Z}', g\in\mathfrak{G}. 
\end{equation*}

But then, for any $g\in\mathfrak{G}$, by duality the function $\langle T\psi(\cdot),g \rangle = \langle \psi(\cdot),T^*g \rangle$ is an element of $L^1(\mu)$, because $T^*g\in \mathfrak{Z}$. In virtue of Proposition \ref{prop: the integral} again we can therefore define

\begin{equation*}
    \int_E  T\psi(x)\,\mu(dx) \in \mathfrak{G}'
\end{equation*}
such that for all $g\in\mathfrak{G}$
\begin{equation*}
    \left\langle\int_E  T\psi(x)\,\mu(dx) ,g\right\rangle = \int_E  \langle T\psi(x),g\rangle \,\mu(dx)\,. 
\end{equation*}
Thus, we obtain for any $g\in\mathfrak{G}$
\begin{align*}
\left\langle T\int_E\psi(x)\mu(dx),g\right\rangle&=\left\langle\int_E\psi(x)\mu(dx), T^*g\right\rangle \\
&=\int_E\langle\psi(x), T^*g\rangle\mu(dx) \\
&=\int_E\langle T\psi(x),g\rangle\mu(dx) \\
&=\left\langle\int_E T\psi(x)\mu(dx),g\right\rangle
\end{align*}
and hence the claimed result.
\end{proof}

We remark that in some situations later we will have to deal with linear operators $T: \mathfrak{Z}'\to \mathfrak{Z}'$ which are $\beta(\mathfrak{Z}',\mathfrak{Z}) - \beta(\mathfrak{Z}',\mathfrak{Z})$ continuous, a fact that seems to preclude us the possibility to apply directly the previous Proposition. The following ad hoc result will come to our aid.

\begin{proposition}\label{prop:continuity with Mackey topology}
Assume that $\beta(\mathfrak{Z}',\mathfrak{Z})=\tau(\mathfrak{Z}',\mathfrak{Z})$, namely that the strong topology coincides with the Mackey topology. Let $T: (\mathfrak{Z}',\beta(\mathfrak{Z}',\mathfrak{Z}))\to (\mathfrak{Z}',\beta(\mathfrak{Z}',\mathfrak{Z}))$ be linear and continuous. Then $T: (\mathfrak{Z}',\sigma(\mathfrak{Z}',\mathfrak{Z}))\to (\mathfrak{Z}',\sigma(\mathfrak{Z}',\mathfrak{Z}))$ is continuous.
\end{proposition}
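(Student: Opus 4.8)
The plan is to reduce everything to the duality criterion for weak continuity already recalled in Section~\ref{sec: rapid excursion}, namely Schaefer's \cite[Proposition 2.1, page 128]{Schaefer}: a linear map $u:(\mathfrak{Z}',\sigma(\mathfrak{Z}',\mathfrak{Z}))\to(\mathfrak{F}',\sigma(\mathfrak{F}',\mathfrak{F}))$ (with $\mathfrak{F}$ another Fr\'echet space) is continuous if and only if its algebraic adjoint $u^\ast:(\mathfrak{F}')^\ast\to(\mathfrak{Z}')^\ast$ satisfies $u^\ast(\mathfrak{F})\subset\mathfrak{Z}$, where $\mathfrak{Z}$ and $\mathfrak{F}$ are identified with subspaces of $(\mathfrak{Z}')^\ast$ and $(\mathfrak{F}')^\ast$ respectively. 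Applying this with $\mathfrak{F}=\mathfrak{Z}$ and $u=T$, it suffices to prove that $T^\ast(\mathfrak{Z})\subset\mathfrak{Z}$.

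First I would pin down the topological dual of $(\mathfrak{Z}',\beta(\mathfrak{Z}',\mathfrak{Z}))$. By hypothesis $\beta(\mathfrak{Z}',\mathfrak{Z})=\tau(\mathfrak{Z}',\mathfrak{Z})$, and the Mackey topology is by definition consistent with the duality $\langle\mathfrak{Z}',\mathfrak{Z}\rangle$; hence $(\mathfrak{Z}',\beta(\mathfrak{Z}',\mathfrak{Z}))'=\mathfrak{Z}$. This identification is the crucial point of the whole argument: it is precisely where the assumption $\beta=\tau$ is used, since in general the dual of the strong dual is the bidual, which may be strictly larger than $\mathfrak{Z}$, and then the conclusion would fail.

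Next, fix $z\in\mathfrak{Z}$, viewed as an element of $(\mathfrak{Z}')^\ast$ via the canonical embedding. By definition of the algebraic adjoint, $T^\ast z$ is the linear functional $z'\mapsto\langle Tz',z\rangle$ on $\mathfrak{Z}'$, i.e. $T^\ast z=z\circ T$. Now $z:(\mathfrak{Z}',\beta(\mathfrak{Z}',\mathfrak{Z}))\to\mathbb{F}$ is continuous, because by the previous paragraph $z$ belongs to the topological dual of $(\mathfrak{Z}',\beta(\mathfrak{Z}',\mathfrak{Z}))$; and $T:(\mathfrak{Z}',\beta(\mathfrak{Z}',\mathfrak{Z}))\to(\mathfrak{Z}',\beta(\mathfrak{Z}',\mathfrak{Z}))$ is continuous by assumption. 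Hence the composition $z\circ T$ is $\beta(\mathfrak{Z}',\mathfrak{Z})$-continuous, so $T^\ast z=z\circ T\in(\mathfrak{Z}',\beta(\mathfrak{Z}',\mathfrak{Z}))'=\mathfrak{Z}$. Since $z\in\mathfrak{Z}$ was arbitrary, this shows $T^\ast(\mathfrak{Z})\subset\mathfrak{Z}$.

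Finally, invoking \cite[Proposition 2.1, page 128]{Schaefer} once more, the inclusion $T^\ast(\mathfrak{Z})\subset\mathfrak{Z}$ is equivalent to the continuity of $T:(\mathfrak{Z}',\sigma(\mathfrak{Z}',\mathfrak{Z}))\to(\mathfrak{Z}',\sigma(\mathfrak{Z}',\mathfrak{Z}))$, which is the assertion. The only delicate step is the identification $(\mathfrak{Z}',\beta(\mathfrak{Z}',\mathfrak{Z}))'=\mathfrak{Z}$ under the standing hypothesis; the rest is a routine unwinding of the definitions of transpose and algebraic adjoint.
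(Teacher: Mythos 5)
Your argument is correct and follows essentially the same route as the paper's own proof: both hinge on the identification $(\mathfrak{Z}',\beta(\mathfrak{Z}',\mathfrak{Z}))'=(\mathfrak{Z}',\tau(\mathfrak{Z}',\mathfrak{Z}))'=\mathfrak{Z}$ supplied by the hypothesis, and on observing that $z\circ T$ is $\beta$-continuous so that it is represented by an element of $\mathfrak{Z}$. The only cosmetic difference is that you phrase the final step via the adjoint criterion $T^\ast(\mathfrak{Z})\subset\mathfrak{Z}$, whereas the paper directly verifies that $z'\mapsto\langle Tz',z\rangle$ is $\sigma(\mathfrak{Z}',\mathfrak{Z})$-continuous for each $z$; these are the same fact in two notations.
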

\begin{proof}
We know from Section \ref{sec: rapid excursion} that $T:(\mathfrak{Z}',\sigma(\mathfrak{Z}',\mathfrak{Z}))\to(\mathfrak{Z}',\sigma(\mathfrak{Z}',\mathfrak{Z}))$ will be continuous if and only if for any $z\in\mathfrak{Z}$ fixed, the map $\phi_z$ so defined 
\begin{equation*}
\mathfrak{Z}'\ni z'\mapsto \langle Tz',z \rangle \in \mathbb{F}
\end{equation*}
will be $\sigma(\mathfrak{Z}',\mathfrak{Z})$ continuous. 

First of all, we notice that the evaluation map $e_z(z')=\langle z',z\rangle$ is $\beta(\mathfrak{Z}',\mathfrak{Z})$ continuous. Indeed, take an arbitrary net $z'_i\to z'$ with respect to $\beta(\mathfrak{Z}',\mathfrak{Z})$. This is equivalent to saying $p_B(z'_i-z')\to 0$ for any $B\subset\mathfrak{Z}$ bounded. The singleton $\{z\}$ is trivially bounded, and therefore $e_z(z'_i)-e_z(z')\to 0$. Since $\phi_z = e_z\circ T$, we conclude that $\phi_z$ is $\beta(\mathfrak{Z}',\mathfrak{Z})$ continuous. Trivially, it is also linear, and thus we have
\begin{equation*}
    \phi_z \in (\mathfrak{Z}',\beta(\mathfrak{Z}',\mathfrak{Z}))',\quad z\in\mathfrak{Z}.
\end{equation*}
But since the Mackey topology is consistent with the duality, and it is assumed in the present setting to coincide with $\beta(\mathfrak{Z}',\mathfrak{Z})$ , we have
\begin{equation*}
    (\mathfrak{Z}',\beta(\mathfrak{Z}',\mathfrak{Z}))'= (\mathfrak{Z}',\tau(\mathfrak{Z}',\mathfrak{Z}))' = \mathfrak{Z}.
\end{equation*}
Therefore, $\phi_z\in\mathfrak{Z}
$, and thus there must exist a unique $\hat{z}\in \mathfrak{Z}$ such that 
\begin{equation*}
    \phi_z(\cdot) = \langle \cdot,\hat{z}\rangle.
\end{equation*}
But by definition of the weak star topology, the map on the right hand side of this expression is $\sigma(\mathfrak{Z}',\mathfrak{Z}))$ continuous, and thus $\phi_z$ is so. We conclude that $T:(\mathfrak{Z}',\sigma(\mathfrak{Z}',\mathfrak{Z}))\to(\mathfrak{Z}',\sigma(\mathfrak{Z}',\mathfrak{Z}))$ is continuous.
\end{proof}

 \section{Pathwise integration}
 
The aim of this section is to make use of the Gelfand integral in Fr\'echet spaces introduced above to make sense out of integrals like
$$
\int_E\gamma(x)d\xi(x)
$$
where $\xi:\mathcal M\rightarrow\mathfrak{Z}'$ and
$\gamma:M\rightarrow\mathfrak{Z}'$. The overall goal, as well as motivation, is to include stochastic integration, where $\xi$ is random, for example defined via Brownian motion. Our integral will not follow the idea of Stieltjes' integration, but take the approach of absolute continuity with respect to a measure.   

First, let us define what we mean by an {\tt integrator}:
\begin{definition}
Given a measurable space $(M,\mathcal M)$, we say that $\xi:\mathcal M\rightarrow\mathfrak{Z}'$ is an {\it integrator} if, there exist
\begin{itemize}
    \item[(i)] a $\sigma$-finite measure $\mu$ on $(M,\mathcal M)$,
    \item[(ii)] a map $\Xi:\mathcal F(M,\mathfrak{Z}' )\rightarrow\mathcal F(M,\mathfrak{Z}')$, where $\mathcal F(M,\mathfrak{Z}')$ is the set of maps from $M$ to $\mathfrak{Z}'$,
    \item[(iii)] there exists an element $u\in\mathfrak{Z}'$ such that the map $\Xi(u):M\to\mathfrak{Z}'$ is $\mathfrak{Z}'$-Gelfand $\mu$-integrable, and for any $E\in\mathcal M$,
    $$
    \xi(E)=\int_E\Xi(u)(x)\mu(dx).
    $$
    Here, $u$ is considered as the constant map $M\ni x\mapsto u\in\mathfrak{Z}'$ and we refer to $u$ as the unit element of $\Xi$.
\end{itemize}
\end{definition}
If we are given $\xi$, we must produce a measure $\mu$ and map $\Xi$ satisfying the above (i)-(iii). Implicitly, in finding $\Xi$, we must also find a unit element for it. On the other hand, if we specify a measure $\mu$ and a map $\Xi$ together with an element $v\in\mathfrak{Z}'$ for which $\Xi(v):M\to\mathfrak{Z}'$ is $\mathfrak{Z}'$-Gelfand $\mu$-integrable, we can {\it define} an integrator $\xi$ for the measure $\mu$ and map $\Xi$ with unit $v$ by
$$
\xi(E):=\int_E\Xi(v)(x)\mu(dx).
$$
In stochastic integration, we are typically given $\xi$.

For an integrator, we introduce the class of {\it integrands} and define the integral with respect to $\xi$ as follows:
\begin{definition}
Let $\xi$ be an integrator with measure $\mu$ and map $\Xi$. We say that $\gamma:M\rightarrow\mathfrak{Z}'$ is {\tt integrable} with respect to $\xi$ if $\Xi(\gamma)$ is $\mathfrak{Z}'$-Gelfand $\mu$-integrable, and denote the set of such $\gamma$ as $\mathcal I(\xi)$. If $\gamma\in\mathcal I(\xi)$, then for any $E\in\mathcal M$, 
$$
\int_E\gamma(x)d\xi(x):=\int_E\Xi(\gamma)(x)\mu(dx)\in\mathfrak{Z}',
$$
where the right-hand side is interpreted as the Gelfand integral of Section \ref{sec:Gelfand} using $\psi(x):=\Xi(\gamma)(x)$.  
\end{definition}
Choosing $\gamma(x):=u$, with $u$ being the unit element of $\Xi$, we readily get from the definitions above that $\gamma$ is integrable with respect to $\xi$ and 
$\xi(E)=\int_E\Xi(u)(x)\mu(dx)$, as expected. 

The set $\mathcal F(M,\mathfrak{Z}')$ is a vector space over the field $\mathbb F$. The next lemma shows linearity of the defined integral when $\Xi$ is linear. 
\begin{lemma}
Let $\xi$ be an integrator with measure $\mu$ and map $\Xi$. Assume that $\Xi$ is linear, i.e., that for each $a,b\in\mathbb F$ and $\gamma_1,\gamma_2:M\rightarrow\mathfrak{Z}'$, $\Xi(a\gamma_1+b\gamma_2)=a\Xi(\gamma_1)+b\Xi(\gamma_2)$. Then $a\gamma_1+b\gamma_2\in\mathcal I(\xi)$ for all $a,b\in\mathbb F$ whenever $\gamma_1,\gamma_2\in\mathcal I(\xi)$. Furthermore, for all $E\in\mathcal M$
$$
\int_Ea\gamma_1(x)+b\gamma_2(x)d\xi(x)=a\int_E\gamma_1(x)d\xi(x)+b\int_E\gamma_2(x)d\xi(x).
$$
\end{lemma}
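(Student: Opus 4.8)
The plan is to unwind the definitions and reduce everything to the scalar linearity of the ordinary Lebesgue integral, exploiting that the Gelfand integral is \emph{defined} through the dual pairing. First I would record that, since $\gamma_1,\gamma_2\in\mathcal I(\xi)$, by definition the maps $\Xi(\gamma_1)$ and $\Xi(\gamma_2)$ are $\mathfrak Z'$-Gelfand $\mu$-integrable; that is, for every $z\in\mathfrak Z$ both $x\mapsto\langle\Xi(\gamma_1)(x),z\rangle$ and $x\mapsto\langle\Xi(\gamma_2)(x),z\rangle$ lie in $L^1(\mu)$.

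Next I would invoke the assumed linearity of $\Xi$ to write $\Xi(a\gamma_1+b\gamma_2)=a\,\Xi(\gamma_1)+b\,\Xi(\gamma_2)$. Fixing $z\in\mathfrak Z$, the pairing $\langle\Xi(a\gamma_1+b\gamma_2)(x),z\rangle=a\langle\Xi(\gamma_1)(x),z\rangle+b\langle\Xi(\gamma_2)(x),z\rangle$ is then a linear combination of two $L^1(\mu)$ functions, hence itself in $L^1(\mu)$. Since $z$ was arbitrary, $\Xi(a\gamma_1+b\gamma_2)$ is $\mathfrak Z'$-Gelfand $\mu$-integrable, i.e. $a\gamma_1+b\gamma_2\in\mathcal I(\xi)$, and the integral $\int_E(a\gamma_1+b\gamma_2)(x)\,d\xi(x):=\int_E\Xi(a\gamma_1+b\gamma_2)(x)\,\mu(dx)$ is a well-defined element of $\mathfrak Z'$ by Proposition \ref{prop: the integral}.

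For the identity itself I would test against an arbitrary $z\in\mathfrak Z$ and use the defining property $\langle\int_E\psi\,d\mu,z\rangle=\int_E\langle\psi(x),z\rangle\,\mu(dx)$ of the Gelfand integral together with ordinary linearity of the scalar integral:
\[
\left\langle\int_E\Xi(a\gamma_1+b\gamma_2)(x)\,\mu(dx),z\right\rangle
= a\int_E\langle\Xi(\gamma_1)(x),z\rangle\,\mu(dx)+b\int_E\langle\Xi(\gamma_2)(x),z\rangle\,\mu(dx),
\]
and the right-hand side equals $a\langle\int_E\gamma_1\,d\xi,z\rangle+b\langle\int_E\gamma_2\,d\xi,z\rangle=\langle a\int_E\gamma_1\,d\xi+b\int_E\gamma_2\,d\xi,\,z\rangle$. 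Since this holds for all $z\in\mathfrak Z$ and $\mathfrak Z$ and $\mathfrak Z'$ are in duality (so $\mathfrak Z$ separates the points of $\mathfrak Z'$), the two elements of $\mathfrak Z'$ coincide, which is the claim.

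There is no real obstacle here: the only point requiring a modicum of care is that the asserted equality is an equality of elements of $\mathfrak Z'$, so it cannot be read off pointwise on $M$ but must be deduced from equality of all dual pairings via the separation property recalled in Section \ref{sec: rapid excursion}. Everything else is bookkeeping with the definitions of integrator, integrand and the Gelfand integral.
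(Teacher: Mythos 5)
Your argument is correct and is essentially the paper's own proof: both reduce the integrability claim to the observation that $\langle\Xi(a\gamma_1+b\gamma_2)(\cdot),z\rangle=a\langle\Xi(\gamma_1)(\cdot),z\rangle+b\langle\Xi(\gamma_2)(\cdot),z\rangle$ is a linear combination of $L^1(\mu)$ functions, and both deduce the identity from the linearity of $\Xi$ together with the linearity of the Gelfand integral tested against arbitrary $z\in\mathfrak Z$. You simply spell out the final duality/separation step that the paper leaves implicit.
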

\begin{proof}
By the linearity of $\Xi$, we find for every $z\in\mathfrak{Z}$ that 
$$
\langle\Xi(a\gamma_1+b\gamma_2)(\cdot),z\rangle=a\langle\Xi(\gamma_1)(\cdot),z\rangle+b\langle\Xi(\gamma_2)(\cdot),z\rangle\in L^1(\mu).
$$
The linearity of the integral follows again from the linearity of $\Xi$ and the linearity of the Gelfand integral.  
\end{proof}
Linearity of an integral with respect to the integrand is a desirable property, and we shall from now on mostly focus on linear maps $\Xi$ of the form discussed below. 

We also recall the continuity of the integral
with respect to the domain of integration provided by Proposition \ref{prop: weak star continuity wrt FN metric}.

\subsection{Pathwise integration in algebras}

Let us now consider a special case relevant for stochastic integration. 
Suppose $(\mathfrak{Z}',\sigma(\mathfrak{Z}',\mathfrak{Z}))$ is a locally convex algebra, where additionally the multiplication, denoted by $\diamond$, is assumed to be associative and with unit $1\in\mathfrak{Z}'$. Note that $\diamond$ is separately $\sigma(\mathfrak{Z}',\mathfrak{Z})-\sigma(\mathfrak{Z}',\mathfrak{Z})$ continuous by assumption, and by this we mean that for any net $(\gamma_i)_i$ in $\mathfrak{Z}'$ converging to $\gamma\in\mathfrak{Z}'$ with respect to $\sigma(\mathfrak{Z}',\mathfrak{Z})$, it holds that the net $(\langle \theta\diamond\gamma_i,z\rangle)_i$ converges to 
$\langle\theta\diamond\gamma,z\rangle$ for any $\theta\in\mathfrak{Z}'$ and any $z\in\mathfrak{Z}$; and a completely analogous statement holds for left-continuity. 

We can define integrators by the following. We first assume that for a given $\xi:\mathcal M\to\mathfrak{Z}'$ there exists a $\sigma$-finite measure $\mu$ on $(M,\mathcal M)$ and a map
$\dot{\xi}:M\rightarrow\mathfrak{Z}'$ such that $\dot{\xi}$ is $\mathfrak{Z}'$-Gelfand $\mu$-integrable, and
$$
\xi(E)=\int_E\dot{\xi}(x)\mu(dx) \in \mathfrak{Z}'
$$
for any $E\in\mathcal M$. Next, define the map 
\begin{equation}
\label{def:map-for-unital-algebra}
    \Xi(\gamma)(x)=\gamma(x)\diamond\dot{\xi}(x)
\end{equation}
This defines a linear map on $\mathcal F(M,\mathfrak{Z}')$ into itself. Moreover, $\Xi(1)=\dot{\xi}$, and therefore $1$ is the unit element of $\Xi$ since
$$
\xi(E)=\int_E\Xi(1)(x)\mu(dx).
$$
Here, $1$ is considered as the constant map $M\ni x\mapsto 1\in\mathfrak{Z}'$. This shows that $\xi$ is an integrator with measure $\mu$ and linear map $\Xi$ given in \eqref{def:map-for-unital-algebra}.  
We may not have a commutative product $\diamond$, and thus we could alternatively define $\Xi(\gamma)(x)=\dot{\xi}(x)\diamond\gamma(x)$.
If $\gamma\in\mathcal I(\xi)$, that is, if $\gamma$ is such that
$\gamma\diamond\dot{\xi}$ is $\mathfrak{Z}'$-Gelfand $\mu$-integrable, then 
$$
\int_E\gamma(x)d\xi(x):=\int_E\gamma(x)\diamond\dot{\xi}(x)\mu(dx)
$$
for $E\in\mathcal M$. 

\begin{example}
\label{ex-hida-int}
As we recall from the the introduction to Hida spaces in Section \ref{sec: rapid excursion},  
we can select $\mathfrak{Z}$ to be the space of Hida smooth random variables $(\mathcal S)$, and $\mathfrak{Z}'$ to be the Hida distribution space $(\mathcal S)'$. This space is endowed with the inductive limit topology $\tau_{\texttt{ind}}$, which coincides both with the strong topology $\beta((\mathcal S)',(\mathcal S)) $ and the Mackey topology $\tau((\mathcal S)',(\mathcal S))$ \cite[Page 482]{HKPS}. We observe that $(\mathcal S)'$ is also a Montel space, being the strong dual of a Montel space \cite[Page 147]{Schaefer}. 

On $(\mathcal S)'$ we have the Wick product $\diamond$ which, in view of 
\cite[Corollary 4.22]{HKPS}  
$$
\Vert F\diamond G\Vert_{2,-r}\leq C\Vert F\Vert_{2,-p}\Vert G\Vert_{2,-q}
$$
where $r=-p\vee q-\alpha$, $\alpha>1/2$, is jointly continuous with respect to $\tau_{\texttt{ind}}$. It is commutative, distributive and associative (see \cite[Lemma 2.4.5]{HOUZ}). A double application of Proposition \ref{prop:continuity with Mackey topology} enables us then to conclude that actually $\diamond$ is separately $\sigma((\mathcal S)',(\mathcal S))-\sigma((\mathcal S)',(\mathcal S))$ continuous too. We can define an integral with respect to Brownian motion $\xi:=B$ as
$$
\int_E \gamma(s)dB(s):=\int_E \gamma(s)\diamond W(s)ds
$$
where $\gamma$ is a mapping from $E$ to $(\mathcal S)'$ such that $s\mapsto\gamma(s)\diamond W(s)$ is integrable on $E$. Naturally, $E$ is some measurable subset of $\mathbb R$ and we recall $W(s)$ to be white noise.
Notice that the unit element is the trivial random variable $1\in (L^2)$ (see Example 2.5.10 in \cite{HOUZ}). Usually, the left-hand side above is referred to as the Skorohod integral, and we denote it by $\int_E\gamma(s)\delta B(s)$ to distinguish it from the special case of Ito integral. Notice that by Corollary \ref{corr: strong continuity wrt FN metric} we have strong continuity of $t\mapsto\int_0^t\gamma(s)\delta B(s)\in (\mathcal S)'$.
\end{example}

As the next result shows, we can pull constants out of the integral:
\begin{lemma}
Assume $\theta\in\mathfrak{Z}'$.
Then, for any $\gamma\in\mathcal I(\xi)$ we have that $\theta\diamond \gamma\in\mathcal I(\xi)$ and
$$
\theta\diamond\int_E\gamma(x)d\xi(x)=\int_E(\theta\diamond\gamma(x))d\xi(x).
$$
\end{lemma}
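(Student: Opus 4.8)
The plan is to reduce the statement to an application of Proposition~\ref{prop:commute-lin-op}, using left Wick multiplication by the fixed element $\theta\in\mathfrak{Z}'$ as the linear operator $T$. Concretely, define $T:\mathfrak{Z}'\to\mathfrak{Z}'$ by $Tz':=\theta\diamond z'$. By the standing assumption that $\diamond$ is separately $\sigma(\mathfrak{Z}',\mathfrak{Z})-\sigma(\mathfrak{Z}',\mathfrak{Z})$ continuous, the map $T$ is continuous from $(\mathfrak{Z}',\sigma(\mathfrak{Z}',\mathfrak{Z}))$ into itself, and it is linear because $\diamond$ is bilinear. Thus $T$ satisfies the hypotheses of Proposition~\ref{prop:commute-lin-op} with $\mathfrak{G}=\mathfrak{Z}$.

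Next I would check that $T$ can legitimately be applied to the Gelfand integrand associated with $\gamma\in\mathcal I(\xi)$. Recall that $\gamma\in\mathcal I(\xi)$ means precisely that $\psi(x):=\Xi(\gamma)(x)=\gamma(x)\diamond\dot{\xi}(x)$ is $\mathfrak{Z}'$-Gelfand $\mu$-integrable, so Proposition~\ref{prop:commute-lin-op} applies to this $\psi$. It yields that $T\psi=\theta\diamond(\gamma(\cdot)\diamond\dot\xi(\cdot))$ is $\mathfrak{Z}'$-Gelfand $\mu$-integrable and that, for every $E\in\mathcal M$,
$$
\theta\diamond\int_E \Xi(\gamma)(x)\,\mu(dx)=T\int_E\psi(x)\,\mu(dx)=\int_E T\psi(x)\,\mu(dx)=\int_E\bigl(\theta\diamond(\gamma(x)\diamond\dot\xi(x))\bigr)\,\mu(dx).
$$
Now I invoke associativity of $\diamond$ (assumed in this subsection): $\theta\diamond(\gamma(x)\diamond\dot\xi(x))=(\theta\diamond\gamma(x))\diamond\dot\xi(x)=\Xi(\theta\diamond\gamma)(x)$, pointwise in $x$. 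Hence $\Xi(\theta\diamond\gamma)$ equals $T\psi$, which we just showed is $\mathfrak{Z}'$-Gelfand $\mu$-integrable; this is exactly the statement $\theta\diamond\gamma\in\mathcal I(\xi)$. Substituting into the chain of equalities above and unwinding the definition $\int_E\,\cdot\,d\xi(x):=\int_E\Xi(\cdot)(x)\,\mu(dx)$ gives
$$
\theta\diamond\int_E\gamma(x)\,d\xi(x)=\int_E\bigl(\theta\diamond\gamma(x)\bigr)\,d\xi(x),
$$
which is the claim.

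The only subtle point—and the place where one must be a little careful—is the verification that $T=\theta\diamond(\cdot)$ is genuinely $\sigma(\mathfrak{Z}',\mathfrak{Z})-\sigma(\mathfrak{Z}',\mathfrak{Z})$ continuous, since Proposition~\ref{prop:commute-lin-op} demands continuity for the \emph{weak star} topology, not the strong one. In the present subsection this is built into the hypotheses: the multiplication is assumed separately $\sigma(\mathfrak{Z}',\mathfrak{Z})-\sigma(\mathfrak{Z}',\mathfrak{Z})$ continuous, so left multiplication by the fixed $\theta$ is weak star continuous by that very assumption (and for metrizability-free generality one uses the net characterization of weak star continuity recalled in Section~\ref{sec: rapid excursion}, namely that $z'\mapsto\langle\theta\diamond z',z\rangle$ is continuous for each $z\in\mathfrak{Z}$). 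I do not anticipate any genuine obstacle beyond bookkeeping with the definitions; the proof is essentially a one-line corollary of Proposition~\ref{prop:commute-lin-op} together with associativity of the Wick-type product.
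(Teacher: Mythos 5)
Your proof is correct and follows essentially the same route as the paper: the paper's own argument also introduces the left Wick multiplication operator $T\gamma=\theta\diamond\gamma$, observes that it is $\sigma(\mathfrak{Z}',\mathfrak{Z})-\sigma(\mathfrak{Z}',\mathfrak{Z})$ continuous by the standing separate-continuity assumption on the product, applies Proposition~\ref{prop:commute-lin-op}, and concludes via associativity. Your write-up is in fact slightly more careful than the paper's in spelling out why $\Xi(\theta\diamond\gamma)=T\Xi(\gamma)$ gives the integrability claim $\theta\diamond\gamma\in\mathcal I(\xi)$.
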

\begin{proof}
Introduce the linear map
$$
T:\mathfrak{Z}'\rightarrow\mathfrak{Z}',\qquad T\gamma=\theta\diamond\gamma,
$$
which is $\sigma(\mathfrak{Z}',\mathfrak{Z})-\sigma(\mathfrak{Z}',\mathfrak{Z})$ continuous by the assumption on the product on $\mathfrak{Z}'$. Then, from Prop. \ref{prop:commute-lin-op} as well as by the associativity of the product, we find for $\gamma\in\mathcal I(\xi)$ that $T\gamma\in\mathcal I(\xi)$ and
\begin{align*}
\theta\diamond\int_E\gamma(x)d\xi(x)&=\int_E T(\Xi(\gamma)(x))\mu(dx)\\&=\int_E\theta\diamond\gamma(x)\diamond\dot{\xi}(x)\mu(dx)\\&=\int_E(\theta\diamond\gamma(x))d\xi(x).
\end{align*}
\end{proof}
The above result is known for the Skorohod integral in White Noise Analysis, see Holden {\it et al.} \cite[Cor. 2.5.12]{HOUZ}. 

From the definition of $\Xi(\gamma)(x)=\gamma(x)\diamond\dot{\xi}(x)$, we observe that we can express $\Xi$ as a linear operator $\Xi_x:\mathfrak{Z}'\rightarrow\mathfrak{Z}'$ for each fixed $x\in M$ as
\begin{equation}
\label{lin-op-Xi}
\Xi(\gamma)(x):=\Xi_x(\gamma(x))
\end{equation}
where
$$
\mathfrak{Z}'\ni\eta\mapsto\Xi_x(\eta)=\eta\diamond\dot{\xi}(x) 
$$
Hence, $M\ni x\rightarrow\Xi_x\in L(\mathfrak{Z}',\sigma)$, where $L(\mathfrak{Z}',\sigma)$ is the space of linear and $\sigma(\mathfrak{Z}',\mathfrak{Z})-\sigma(\mathfrak{Z}',\mathfrak{Z})$ continuous operators on $\mathfrak{Z}'$.
Let us now study operators $\Xi$ which has a representation given by a $\Xi_x\in L(\mathfrak Z',\sigma)$ as in \eqref{lin-op-Xi} in the general situation.

For maps $\Xi_x$ being linear, we can readily show a Fubini result on the pathwise Gelfand integrals:
\begin{proposition}
\label{prop:fubini-gen-stoch}
Assume, for $i=1,2$, that $\xi_i$ are integrators with measures $\mu_i$ on the
spaces $(M_i,\mathcal M_i)$ with maps $\Xi_i$. For $\Xi_1$ and $\Xi_2$, assume there exist $\Xi_{1,x},\Xi_{2,y}\in L(\mathfrak{Z}',\sigma)$, $x\in M_1,y\in M_2$, resp., such that \eqref{lin-op-Xi} hold. Consider a mapping 
$$
M_1\times M_2\ni(x,y)\mapsto \gamma(x,y)\in\mathfrak Z',
$$
where $\gamma(x,\cdot)\in\mathcal I(\xi_2)$ for any $x\in M_1$, $\gamma(\cdot,y)\in\mathcal I(\xi_1)$ for any $y\in M_2$, and $\int_{E_2}\gamma(\cdot,y)d\xi_2(y)\in\mathcal I(\xi_1)$ and $\int_{E_1}\gamma(x,\cdot)d\xi_1(x)\in\mathcal I(\xi_2)$ for any $E_i\in\mathcal M_i$, $i=1,2$. 

If $\Xi_{1,x}$ and $\Xi_{2,y}$ commutes, that is, 
$$
\Xi_{1,x}(\Xi_{2,y}(\gamma(x,y)))=\Xi_{2,y}(\Xi_{1,x}(\gamma(x,y))), \quad (x,y)\in M_1\times M_2
$$
with $(x,y)\mapsto \Xi_{1,x}(\Xi_{2,y}(\gamma(x,y)))$ being 
$\mathfrak{Z}'$-Gelfand $\mu_1\times\mu_2$-integrable, then
$$
\int_{E_1}\int_{E_2}\gamma(x,y)d\xi_2(x)d\xi_1(y)=\int_{E_2}\int_{E_1}\gamma(x,y)d\xi_1(y)d\xi_2(x)
$$
for any $E_i\in\mathcal M_i$, $i=1,2$. 
\end{proposition}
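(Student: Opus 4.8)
The strategy is to reduce everything to the abstract Fubini result, Theorem~\ref{thm:fubini}, applied to a single auxiliary integrand on the product space. The key observation is that, by the representation \eqref{lin-op-Xi}, the iterated pathwise integral
$\int_{E_1}\int_{E_2}\gamma(x,y)\,d\xi_2(x)\,d\xi_1(y)$ unfolds, by the very definitions of the pathwise integral and the Gelfand integral, into an iterated Gelfand integral of the map $(x,y)\mapsto \Xi_{1,y}\bigl(\Xi_{2,x}(\gamma(x,y))\bigr)$ against $\mu_1\otimes\mu_2$. Here I should be careful with the bookkeeping of the indices in the statement ($\xi_2$ is integrated in the variable $x\in M_1$ and $\xi_1$ in the variable $y\in M_2$, which is slightly unusual but harmless). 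The commuting hypothesis says precisely that this map equals $(x,y)\mapsto \Xi_{2,x}\bigl(\Xi_{1,y}(\gamma(x,y))\bigr)$, which is the integrand arising from the other iterated integral.

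First I would spell out the inner integral: for fixed $y\in M_2$, since $\gamma(\cdot,y)\in\mathcal I(\xi_1)$ we have
$\int_{E_1}\gamma(x,y)\,d\xi_1(y)$ — more precisely $\int_{E_1}\Xi_{1}(\gamma(\cdot,y))(x)\,\mu_1(dx)=\int_{E_1}\Xi_{1,x}(\gamma(x,y))\,\mu_1(dx)$ — as a well-defined element of $\mathfrak Z'$, with the defining property that pairing it with any $z\in\mathfrak Z$ gives $\int_{E_1}\langle \Xi_{1,x}(\gamma(x,y)),z\rangle\,\mu_1(dx)$. Next, applying $\Xi_{2,x}$ and integrating in $x$ — legitimate because $\int_{E_1}\gamma(x,\cdot)\,d\xi_1(x)\in\mathcal I(\xi_2)$ — and using that $\Xi_{2,x}$ is $\sigma$–$\sigma$ continuous and linear, Proposition~\ref{prop:commute-lin-op} lets me push $\Xi_{2,x}$ inside the $\mu_1$-integral. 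This shows, for every $z\in\mathfrak Z$,
\[
\left\langle \int_{E_2}\int_{E_1}\gamma(x,y)\,d\xi_1(y)\,d\xi_2(x),\,z\right\rangle
= \int_{E_2}\int_{E_1}\langle \Xi_{2,x}(\Xi_{1,y}(\gamma(x,y))),z\rangle\,\mu_1(dx)\,\mu_2(dy).
\]
A symmetric computation gives the analogue for the other order of integration, with the integrand $\langle \Xi_{1,y}(\Xi_{2,x}(\gamma(x,y))),z\rangle$ and the integrations performed in the opposite order.

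Now I invoke the commuting hypothesis to identify the two integrands pointwise, and the stated joint $\mathfrak Z'$-Gelfand $\mu_1\otimes\mu_2$-integrability of $(x,y)\mapsto \Xi_{1,y}(\Xi_{2,x}(\gamma(x,y)))$ to apply Theorem~\ref{thm:fubini}: the two iterated scalar integrals above both equal $\int_{M_1\times M_2}\langle\Xi_{1,y}(\Xi_{2,x}(\gamma(x,y))),z\rangle\,I_{E_1\times E_2}\,d(\mu_1\otimes\mu_2)$, hence coincide, for every $z\in\mathfrak Z$. Since both iterated pathwise integrals are genuine elements of $\mathfrak Z'$ (established in the previous paragraph) and $\mathfrak Z'$ separates points of $\mathfrak Z$, equality of all pairings forces equality in $\mathfrak Z'$, which is the claim.

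The main obstacle is not any deep topological fact but the care needed to justify each interchange: specifically, verifying that the hypotheses of Theorem~\ref{thm:fubini} and of Proposition~\ref{prop:commute-lin-op} are met at each stage — i.e.\ that the relevant scalarized maps lie in the appropriate $L^1$ spaces, which is exactly what the chain of assumptions ``$\gamma(x,\cdot)\in\mathcal I(\xi_2)$, $\gamma(\cdot,y)\in\mathcal I(\xi_1)$, the inner integrals lie in $\mathcal I(\xi_1)$ resp.\ $\mathcal I(\xi_2)$, and the joint integrability'' is designed to supply. One subtlety worth flagging (as in the Remark after Theorem~\ref{thm:fubini}) is that the classical Fubini-Tonelli step is applied for each fixed $z$ separately, so the $\mu_1$-null exceptional set may depend on $z$; this is harmless precisely because we only ever conclude an identity of the two iterated integrals after recognizing both as the single product integral, never by moving $z$ through the integrals one at a time.
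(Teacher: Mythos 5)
Your proposal is correct and follows essentially the same route as the paper's proof: unfold both iterated pathwise integrals into iterated Gelfand integrals by pushing the (linear, $\sigma$--$\sigma$ continuous) outer operator inside via Proposition~\ref{prop:commute-lin-op}, identify the two integrands through the commutation hypothesis, and conclude with the Gelfand Fubini Theorem~\ref{thm:fubini}. The only deviations are cosmetic: you work at the level of scalar pairings with a fixed $z$ (and correctly flag the $z$-dependent null sets, echoing the remark after Theorem~\ref{thm:fubini}), and your index placement $\Xi_{2,x},\Xi_{1,y}$ follows the typo in the proposition's displayed conclusion rather than the convention $\Xi_{1,x},\Xi_{2,y}$ ($x\in M_1$, $y\in M_2$) used in the hypotheses and in the paper's proof.
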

\begin{proof}
By the assumptions, we find 
\begin{align*}
\int_{E_1}\int_{E_2}\gamma(x,y)d\xi_2(y)d\xi_1(x)&=\int_{E_1}\int_{E_2}\Xi_{2,y}(\gamma(x,y))\mu_2(dy)d\xi_1(x) \\
&=\int_{E_1}\Xi_{1,x}\left(\int_{E_2}\Xi_{2,y}(\gamma(x,y))\mu_2(dy)\right)\mu_1(dx) \\
&=\int_{E_1}\int_{E_2}\Xi_{1,x}(\Xi_{2,y}(\gamma(x,y))\mu_2(dy)\mu_1(dx)
\end{align*}
where we appealed to Prop. \ref{prop:commute-lin-op} in the last equality. 

Analogously, 
$$
\int_{E_2}\int_{E_1}\gamma(x,y)d\xi_1(x)d\xi_2(y)=\int_{E_2}\int_{E_1}\Xi_{2,y}(\Xi_{1,x}(\gamma(x,y))\mu_1(dx)\mu_2(dy). 
$$
From the commutativity and joint integrability assumption of $\Xi_{1,x}$ and $\Xi_{2,y}$, the result follows from the Fubini Theorem \ref{thm:fubini} of Gelfand integrals. 
\end{proof}
Apart from natural integrability conditions, the Fubini result above rests on the assumptions that $\Xi_{1,x}$ and $\Xi_{2,y}$ are linear and continuous operators which commute. We can use Prop. \ref{prop:fubini-gen-stoch} to define iterative stochastic integrals in the Hida space. Recall the set-up in Example \ref{ex-hida-int}. As $(\gamma(s,t)\diamond W(s))\diamond W(t)=(\gamma(s,t)\diamond W(t))\diamond W(s)$, we invoke under natural integrability conditions on $(s,t)\mapsto\gamma(s,t)$ that
$$
\int_{E_1}\int_{E_2}\gamma(s,t)\delta B(s)\delta B(t)=\int_{E_2}\int_{E_1}\gamma(s,t)\delta B(t)\delta B(s)
$$
for $E_1,E_2$ being measurable subsets of $\mathbb R$. To have a Fubini-result for Ito integration, we must further 
invoke measurability conditions like adaptedness.

\subsection{The Gelfand derivative}

We have the following duality representation for the integral:
\begin{lemma}
\label{lemma:gradient}
Suppose $\xi$ is an integrator with map $\Xi$, and assume that there exists for every $x\in M$
a $\Xi_x\in L(\mathfrak{Z}',\sigma)$ such that for any $\gamma\in\mathcal I(\xi)$, $\Xi(\gamma)(x)=\Xi_x(\gamma(x))$. Then,
$$
\left\langle\int_E\gamma(x)d\xi(x),z\right\rangle=\int_E\langle\gamma(x),\Xi_x^*(z)\rangle\mu(dx)
$$
for any $E\in\mathcal{M}$ and any $z\in\mathfrak{Z}$, where $\Xi_x^*:(\mathfrak{Z}')^*\rightarrow(\mathfrak{Z}')^*$ is the dual of $\Xi_x$.  
\end{lemma}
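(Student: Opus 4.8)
The plan is to unwind the two layers of definitions and then reduce the statement to the transpose relation between $\Xi_x$ and its algebraic adjoint. First I would recall that, by definition of the pathwise integral, $\int_E\gamma(x)\,d\xi(x) = \int_E\Xi(\gamma)(x)\,\mu(dx)$ is nothing but the Gelfand integral of $\psi(x):=\Xi(\gamma)(x)$. Since $\gamma\in\mathcal I(\xi)$, this $\psi$ is $\mathfrak Z'$-Gelfand $\mu$-integrable by hypothesis, so Proposition \ref{prop: the integral} applies and, for every fixed $z\in\mathfrak Z$,
$$
\left\langle\int_E\gamma(x)\,d\xi(x),z\right\rangle = \int_E\langle\Xi(\gamma)(x),z\rangle\,\mu(dx) = \int_E\langle\Xi_x(\gamma(x)),z\rangle\,\mu(dx),
$$
where the last equality uses the assumed representation $\Xi(\gamma)(x)=\Xi_x(\gamma(x))$, valid for $\gamma\in\mathcal I(\xi)$.

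Next I would invoke the duality fact recalled in Section \ref{sec: rapid excursion}: since each $\Xi_x$ is linear and $\sigma(\mathfrak Z',\mathfrak Z)$-$\sigma(\mathfrak Z',\mathfrak Z)$ continuous (this is exactly what $\Xi_x\in L(\mathfrak Z',\sigma)$ encodes), its algebraic adjoint $\Xi_x^\ast$ maps $\mathfrak Z$ into $\mathfrak Z$, with $\mathfrak Z$ identified as a subspace of $(\mathfrak Z')^\ast$, and satisfies $\langle\Xi_x z',z\rangle=\langle z',\Xi_x^\ast z\rangle$ for all $z'\in\mathfrak Z'$ and $z\in\mathfrak Z$. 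In particular $\Xi_x^\ast(z)\in\mathfrak Z$, so $\langle\gamma(x),\Xi_x^\ast(z)\rangle$ is a genuine $\langle\mathfrak Z',\mathfrak Z\rangle$-pairing and one has the pointwise identity $\langle\Xi_x(\gamma(x)),z\rangle=\langle\gamma(x),\Xi_x^\ast(z)\rangle$ for every $x\in M$. Substituting this into the displayed formula yields the claim. As a byproduct, the integrand $x\mapsto\langle\gamma(x),\Xi_x^\ast(z)\rangle$ is literally equal to $x\mapsto\langle\Xi(\gamma)(x),z\rangle\in L^1(\mu)$, so no separate measurability or integrability check is required.

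There is essentially no serious obstacle here; the statement is a direct consequence of the definition of the Gelfand integral combined with the transpose relation, and no convergence theorem is invoked. The only point that needs genuine care is the bookkeeping about where the adjoint sends $\mathfrak Z$: the $\sigma$-$\sigma$ continuity of $\Xi_x$ must be used precisely to guarantee $\Xi_x^\ast(\mathfrak Z)\subset\mathfrak Z$, which is what makes the right-hand side of the asserted identity well defined as a duality pairing in the first place; without it one could only read $\Xi_x^\ast(z)$ in $(\mathfrak Z')^\ast$ and pair it against $\gamma(x)$ through the purely algebraic duality.
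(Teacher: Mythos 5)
Your proof is correct and follows essentially the same route as the paper: unwind the definition of the pathwise integral via Proposition \ref{prop: the integral}, then use the $\sigma$-$\sigma$ continuity of $\Xi_x$ to get $\Xi_x^\ast(\mathfrak Z)\subset\mathfrak Z$ and apply the transpose relation pointwise. Your version is somewhat more explicit about the bookkeeping (in particular that the integrand on the right-hand side is literally the same $L^1(\mu)$ function), but the ideas are identical.
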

\begin{proof}
Observe that $\gamma\in\mathcal I(\xi)$ means with the definition of $\Xi$ that $x\mapsto\langle\Xi_x(\gamma(x)),z\rangle$ is integrable with respect to $\mu$ on $M$. 
Next, since $\sigma(\mathfrak{Z}',\mathfrak{Z})$ is consistent with the duality $\langle\mathfrak{Z}',\mathfrak{Z}\rangle$, it holds $(\mathfrak{Z}',\sigma(\mathfrak{Z}',\mathfrak{Z}))'=\mathfrak{Z}$. Moreover, the dual operator $\Xi_x^*$ satisfies $\Xi_x^*(\mathfrak{Z})\subset\mathfrak{Z}$, because of the continuity of $\Xi_x$, and the result follows. 
\end{proof}
Hence, $\Xi_x^*$ may be interpreted as a "gradient" for the integral. If $\Xi_x(\eta)=\eta\diamond\dot{\xi}(x)$, we may call the dual operator $\Xi_x^*$ the {\it Gelfand derivative} as we obtain an operator representation of the integrals in Lemma \ref{lemma:gradient} generalizing the Hida-Malliavin derivative in the Hida space setting. Indeed, if $\mathfrak{Z}'$ is the 
Hida distribution space $(\mathcal S)'$, then for a Skorohod integrable process $\gamma$ on some interval $[0,t]$ it holds that 
$$
\mathbb E\left[\int_0^t\gamma(s)\delta B(s)z\right]=\mathbb E\left[\int_0^t\gamma(s) D_sz ds\right]
$$
for any Malliavin differentiable $z\in\mathcal D_{1,2}\subset(L^2)$. This is the duality relation between the Skorohod integral and the Malliavin derivative, which is used as the definition of Skorohod integration in Nualart \cite[Def. 1.3.1]{Nualart-book} as an adjoint operator.

Consider now the case where $\mathfrak{Z}$ is an algebra for a product $\cdot$, i.e., $y\cdot z\in\mathfrak{Z}$ for $y,z\in\mathfrak{Z}$. We assume that $\cdot$ is associative and a continuous operator in $\mathfrak{Z}$. 
\begin{remark}
In the Hida space $(\mathcal S)$, this is the pointwise product for random variables, and thus not the same as the Wick product.
\end{remark}

We can define an operator $T\in L(\mathfrak{Z}',\sigma)$ as follows.
\begin{definition}
We fix a $\theta\in\mathfrak{Z}$, and define $T\gamma=\theta\cdot\gamma$ for every $\gamma\in\mathfrak{Z}'$ by the relation
$$
\langle\theta\cdot\gamma,z\rangle:=\langle\gamma,\theta\cdot z\rangle
$$
for any $z\in\mathfrak{Z}$. 
\end{definition}

Let us introduce {\it derivative operators} on $\mathfrak{Z}$:
\begin{definition}
We say that $\Gamma:\mathfrak{Z}\rightarrow\mathfrak{Z}$ is a {\it derivative operator} if 
$$
\Gamma(y\cdot z)=y\cdot\Gamma(z)+\Gamma(y)\cdot z
$$
for any $y,z\in\mathfrak{Z}$.
\end{definition}

In Benth \cite{Benth}, a derivative operator in the Hida distribution space $(\mathcal S)'$ is defined as follows. For an $f\in\mathcal S(\mathbb R)$, recall the the random variable $W(f)(\omega):=(f,\omega)$, with $(\cdot,\cdot)$ being the dual pairing between $\mathcal S(\mathbb R)$ and $\mathcal S'(\mathbb R)$.  Since 
$W(f)\in(\mathcal S)$, we can define the operator $\mathcal D_f:(\mathcal S)'\rightarrow(\mathcal S)'$ as
\begin{equation}
\label{def:gen-mall-der}
\mathcal D_f\Phi:=\Phi\cdot W(f)-\Phi\diamond W(f).
\end{equation}
As the Hida test function space $(\mathcal S)$ is closed under both the pointwise and Wick product, it holds that 
$\mathcal D_f:(\mathcal S)\rightarrow(\mathcal S)$. Moreover, as we show in the appendix, $\mathcal D_f$ is a derivative operator. If $\Phi$ is a Malliavin differentiable random variable, then it is shown in Benth \cite[Thm. 3.3]{Benth} that $\mathcal D_f$ coincides with the Malliavin derivative in the sense that 
$$
\mathcal D_f\Phi=\int_{\mathbb R} (D_t\Phi) f(t)dt.
$$
This links $\mathcal D$ to the Malliavn derivative, yielding a generalization to an operator on $(\mathcal S)'$ which is a derivative on $(\mathcal S)$. We note in passing that $\mathcal D_f$ in fact is a derivative operator on $(\mathcal S)'$ under the Wick product (see Benth \cite[Thm. 4.1]{Benth}). By denoting $f^t(x):=f(x-t)$, the shift of $f\in\mathcal S(\mathbb R)$, consider the {\it smoothed white noise} $\widetilde{W}(t):=( f^t,\cdot)\in(\mathcal S)$. For the operator $\Xi_t(\phi):=\Phi\diamond\widetilde{W}(t)$, we define the stochastic integral
\begin{equation}
\label{def:smoothed-wick-integral}
\int_0^t\Phi(s)\delta\widetilde{B}(s):=\int_0^t\Xi_s(\Phi(s))ds=\int_0^t\Phi(s)\diamond \widetilde{W}(s)ds
\end{equation}
for integrable Hida distribution processes $\Phi(s)$. It holds for any $\phi\in(\mathcal S)$ that (see Appendix for a proof),
\begin{equation}
    \langle\int_0^t\Phi(s)\delta\widetilde{B}(s),\phi\rangle=\int_0^t\langle\Phi(s),\mathcal D_{f^s}(\phi)\rangle ds.
\end{equation}
We therefore have an example of a derivative operator 
$\Xi_t^*:=\mathcal D_{f^t}$. As a last remark, define the integral 
\begin{equation}
\label{def:smoothed-pointwise-integral}
\int_0^t\Phi(s)\delta^{\circ}\widetilde{B}(s):=\int_0^t\Phi(s)\cdot\widetilde{W}(s)ds
\end{equation}
Then, by the definition of $\mathcal D_{f^t}$ it holds that 
\begin{equation}
\label{eq:relation-strat-skorohod}
\int_0^t\Phi(s)\delta^{\circ}\widetilde{B}(s)=\int_0^t\mathcal D_{f^s}\Phi(s)ds+\int_0^t\Phi(s)\delta\widetilde{B}(s)
\end{equation}
for all Hida distribution processes $\Phi$ for which the above integrals are well-defined. Recalling that $\mathcal D_{f_t}$ can be interpreted as a Malliavin detivative for Hida distributions, we have a connection between stochastic integrals defined by pointwise and Wick multiplication with a smoothed noise resembling the relationship between the Stratonovich and Skorohod integral, see Thm. 3.1.1 and the following remark on page 152 in Nualart \cite{Nualart-book}. This lends itself to the interpretation of the integral in \eqref{def:smoothed-pointwise-integral} as a Stratonovich-type of stochastic integral.

Let us return to the general considerations. Under the assumption that $\Xi_x^*$ introduced in Lemma \ref{lemma:gradient} is a derivative operator, we find the following {\it integration by parts} formula for integrals:

\begin{proposition}
Let $\theta\in\mathfrak{Z}$. Suppose that $\xi$ is an integrator with map $\Xi$ such that there exists for every $x\in M$ a $\Xi_x\in L(\mathfrak{Z}',\sigma)$ such that for any $\gamma\in\mathcal I(\xi)$, $\Xi(\gamma)(x)=\Xi_x(\gamma(x))$. If the dual $\Xi_x^*$  
restricted to $\mathfrak{Z}$
is a derivative operator, then 
\begin{equation}
\int_E\theta\cdot\gamma(x)d\xi(x)=\theta\cdot\int_E\gamma(x)d\xi(x)-\int_E\Xi_x^*(\theta)\cdot\gamma(x)\mu(dx)
\end{equation}
\end{proposition}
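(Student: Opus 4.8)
The plan is to test both sides of the claimed identity against an arbitrary $z \in \mathfrak{Z}$ and reduce everything to a scalar computation involving the derivative property of $\Xi_x^*$ on $\mathfrak{Z}$. First I would observe that, since $\theta \in \mathfrak{Z}$ and $\mathfrak{Z}$ is an algebra, the map $\gamma \mapsto \theta \cdot \gamma$ on $\mathfrak{Z}'$ (defined via $\langle \theta \cdot \gamma, z\rangle = \langle \gamma, \theta \cdot z\rangle$) is linear and, because the product on $\mathfrak{Z}$ is continuous, it is $\sigma(\mathfrak{Z}',\mathfrak{Z})$--$\sigma(\mathfrak{Z}',\mathfrak{Z})$ continuous; hence by Proposition \ref{prop:commute-lin-op} it commutes with the Gelfand integral, giving $\theta \cdot \int_E \gamma(x)\,d\xi(x) = \int_E \theta \cdot \Xi_x(\gamma(x))\,\mu(dx)$ as an element of $\mathfrak{Z}'$. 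Likewise I need that $\theta \cdot \gamma \in \mathcal{I}(\xi)$, i.e. that $x \mapsto \langle \Xi_x(\theta\cdot\gamma(x)), z\rangle$ is in $L^1(\mu)$; this should follow from Lemma \ref{lemma:gradient} applied to $\theta\cdot\gamma$ together with the integrability of the other two terms, which I would verify at the point where the identity is assembled.

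Next I would apply Lemma \ref{lemma:gradient} to each of the three Gelfand integrals appearing in the statement, pairing against a fixed $z \in \mathfrak{Z}$. The left side becomes $\int_E \langle \theta \cdot \gamma(x), \Xi_x^*(z)\rangle\,\mu(dx) = \int_E \langle \gamma(x), \theta \cdot \Xi_x^*(z)\rangle\,\mu(dx)$, using the definition of the operator $\gamma \mapsto \theta\cdot\gamma$ and the fact that $\Xi_x^*$ maps $\mathfrak{Z}$ into $\mathfrak{Z}$. For the first term on the right, $\langle \theta \cdot \int_E \gamma\,d\xi, z\rangle = \langle \int_E \gamma\,d\xi, \theta\cdot z\rangle = \int_E \langle \gamma(x), \Xi_x^*(\theta \cdot z)\rangle\,\mu(dx)$. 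For the second term, $\langle \int_E \Xi_x^*(\theta)\cdot\gamma(x)\,\mu(dx), z\rangle = \int_E \langle \Xi_x^*(\theta)\cdot\gamma(x), z\rangle\,\mu(dx) = \int_E \langle \gamma(x), \Xi_x^*(\theta)\cdot z\rangle\,\mu(dx)$, where here I use that $\Xi_x^*(\theta) \in \mathfrak{Z}$ so that $\Xi_x^*(\theta)\cdot z \in \mathfrak{Z}$ and the operator-pairing definition applies.

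The crux is then the pointwise identity inside the integrals: since $\Xi_x^*$ restricted to $\mathfrak{Z}$ is a derivative operator, for the elements $\theta, z \in \mathfrak{Z}$ we have
\[
\Xi_x^*(\theta \cdot z) = \theta \cdot \Xi_x^*(z) + \Xi_x^*(\theta) \cdot z.
\]
Pairing with $\gamma(x)$ and integrating over $E$ against $\mu$, this yields exactly
\[
\int_E \langle \gamma(x), \Xi_x^*(\theta\cdot z)\rangle\,\mu(dx) = \int_E \langle \gamma(x), \theta\cdot\Xi_x^*(z)\rangle\,\mu(dx) + \int_E \langle \gamma(x), \Xi_x^*(\theta)\cdot z\rangle\,\mu(dx),
\]
which is precisely the scalar identity $\langle \text{RHS}, z\rangle = \langle \text{LHS}, z\rangle$ after rearranging, reading the three integrals back through the pairings computed above. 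Since this holds for every $z \in \mathfrak{Z}$ and $\mathfrak{Z}'$ separates points, the two sides agree in $\mathfrak{Z}'$.

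I expect the main obstacle to be bookkeeping around integrability and well-definedness rather than any deep point: one must make sure that $\theta \cdot \gamma$ genuinely lies in $\mathcal{I}(\xi)$ so that $\int_E \theta\cdot\gamma\,d\xi$ is defined as a $\mathfrak{Z}'$-element, and that the map $x \mapsto \Xi_x^*(\theta)\cdot\gamma(x)$ is $\mathfrak{Z}'$-Gelfand $\mu$-integrable so the last term makes sense; both follow once the derivative identity is used to write the relevant scalar functions as sums of two $L^1(\mu)$ functions, but the logical order has to be arranged so that no integral is manipulated before it is known to exist. A secondary subtlety is the tacit use that $\Xi_x^*$ maps $\mathfrak{Z}$ into $\mathfrak{Z}$ (guaranteed by the $\sigma$--$\sigma$ continuity of $\Xi_x$, as in Lemma \ref{lemma:gradient}), which is what lets the derivative-operator hypothesis be applied to $\theta$ and $z$ in the first place.
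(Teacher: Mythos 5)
Your proposal is correct and follows essentially the same route as the paper: pair each term against a fixed $z\in\mathfrak{Z}$, use the adjoint relations $\langle\Xi_x(\eta),z\rangle=\langle\eta,\Xi_x^*(z)\rangle$ and $\langle\theta\cdot\eta,z\rangle=\langle\eta,\theta\cdot z\rangle$ to reduce everything to scalar integrals, and then apply the Leibniz identity $\Xi_x^*(\theta\cdot z)=\theta\cdot\Xi_x^*(z)+\Xi_x^*(\theta)\cdot z$ pointwise. The paper likewise settles $\theta\cdot\gamma\in\mathcal I(\xi)$ first via the chain $\langle\gamma(x),\theta\cdot\Xi_x^*(z)\rangle=\langle\Xi_x(\theta\cdot\gamma(x)),z\rangle$, exactly the bookkeeping point you flag.
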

\begin{proof}

First we observe that by 
assumptions, $\theta\cdot\Xi^*_x(z)\in\mathfrak Z$ and
$\langle\gamma(x),\theta\cdot\Xi_x^*(z)\rangle\in L^1(\mu)$ for all $z\in\mathfrak Z$. By definition of the multiplication operator and duality, we compute
\begin{align*}
\langle\gamma(x),\theta\cdot\Xi_x^*(z)\rangle&=\langle\theta\cdot\gamma(x),\Xi_x^*(z)\rangle=\langle\Xi_x(\theta\cdot\gamma(x)),z\rangle.
\end{align*}
As this holds for any $z\in\mathfrak Z$,
$\theta\cdot\gamma\in\mathcal I(\xi)$. 

We refer to Proposition \ref{prop:commute-lin-op}, Lemma \ref{lemma:gradient} and the derivative property of $\Xi_x^*$ to do the following calculation: for any $z\in\mathfrak{Z}$,
\begin{align*}
    \langle\int_M\theta\cdot\gamma(x)d\xi(x),z\rangle&=\int_M\langle\Xi_x(\theta\cdot\gamma(x)),z\rangle\mu(dx) \\
    &=\int_M\langle\gamma(x),\theta\cdot\Xi_x^*(z)\rangle\mu(dx) \\
    &=\int_M\langle\gamma(x),\Xi_x^*(\theta\cdot z)-\Xi_x^*(\theta)\cdot z\rangle\mu(dx) \\
    &=\int_M\langle\Xi_x(\gamma(x)),\theta\cdot z\rangle\mu(dx)-\int_M\langle\Xi_x^*(\theta)\cdot\gamma(x),z\rangle\mu(dx) \\
    &=\langle\int_M\Xi(\gamma)(x)\mu(dx),\theta\cdot z\rangle-\langle\int_M\Xi_x^*(\theta)\cdot\gamma(x)\mu(dx),z\rangle \\
    &=\langle\theta\cdot\int_M\gamma(x)d\xi(x)-\int_M\Xi_x^*(\theta)\cdot\gamma(x)\mu(dx),z\rangle.
\end{align*}
The claim follows. 
\end{proof}
Thus, we have established an {\it integration-by-parts} formula similar to the one in Malliavin Calculus (where $\Xi_x^*$ is interpreted as the Malliavin derivative), see Nualart \cite[Eq. (1.49) on page 40]{Nualart-book}.

\subsection{Stability for pathwise integrals}

Let us show some stability results for the integral in the general case.
Using dominated convergence we can show the following stability result with respect to the integrators:
\begin{proposition}
\label{prop:stability-integrator}
Suppose that $(\xi_k)_{k\in\mathbb N}$ is a sequence of integrators with common measure $\mu$ and maps $(\Xi_{k})_{k\in\mathbb N}$ where $\bigcap \mathcal I(\xi_k)\neq\emptyset$. Assume there exist a map $\Xi:\mathcal F(M,\mathfrak{Z}')\rightarrow\mathcal F(M,\mathfrak{Z}')$ and a unit element $u\in\mathfrak{Z}'$ such that $\Xi(u)$ is $\mathfrak{Z}'$-Gelfand $\mu$-integrable,
and where the following holds for any $\gamma\in\bigcap\mathcal I(\xi_k)$:
\begin{enumerate}
    \item for any $z\in\mathfrak{Z}$, $x\mapsto\langle\Xi(\gamma)(x),z\rangle$ is $\mu$-measurable,
    \item for $\mu-a.e.$ $x\in M$, $\Xi_k(\gamma)(x)\rightarrow\Xi(\gamma)(x)$ in 
    $(\mathfrak{Z}',\sigma(\mathfrak{Z}',\mathfrak{Z}))$,
    \item for any $z\in\mathfrak{Z}$ there exists a $0\leq g^z\in L^1(\mu)$ such that $\vert\langle\Xi_k(\gamma)(x),z\rangle\vert\leq g^z(x)$ $\mu-a.e.\, x\in M$.
\end{enumerate}
Then $\xi:\mathcal M\rightarrow\mathfrak{Z}'$ defined as 
$$
\xi(E):=\int_E\Xi(\gamma)(x)\mu(dx)
$$
is an integrator, $\bigcap\mathcal I(\xi_k)\subset\mathcal I(\xi)$ and
$$
\int_M\gamma(x)d\xi_k(x)\rightarrow\int_M\gamma(x)d\xi(x)
$$
in $(\mathfrak{Z}',\sigma(\mathfrak{Z}',\mathfrak{Z}))$.
\end{proposition}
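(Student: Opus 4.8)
The plan is to verify the three defining properties of an integrator for $\xi$, then to deduce the convergence from the dominated convergence Theorem~\ref{thm: dom} applied to the sequence of $\mathfrak{Z}'$-valued maps $\Xi_k(\gamma)$. Since the excerpt has already done most of the conceptual work, this is essentially an assembly job.

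First I would check that $\xi$ is an integrator. By hypothesis we are handed the measure $\mu$, the map $\Xi$, and a unit element $u\in\mathfrak{Z}'$ with $\Xi(u)$ being $\mathfrak{Z}'$-Gelfand $\mu$-integrable; the formula $\xi(E)=\int_E\Xi(u)(x)\,\mu(dx)$ is exactly property (iii) of the definition of integrator, provided we confirm that the $\xi$ as defined in the statement (via $\Xi(\gamma)$ for an arbitrary fixed $\gamma\in\bigcap\mathcal I(\xi_k)$) agrees with $\int_E\Xi(u)(x)\,\mu(dx)$. Here one must be slightly careful: the statement defines $\xi(E):=\int_E\Xi(\gamma)(x)\,\mu(dx)$, but for this to be well-posed one needs $\Xi(\gamma)$ to be $\mathfrak{Z}'$-Gelfand $\mu$-integrable, which is precisely what the dominated convergence argument below will yield (hypotheses (1)--(3) are the three assumptions of Theorem~\ref{thm: dom} for the sequence $\{\langle\Xi_k(\gamma)(\cdot),z\rangle\}_k$ and its pointwise weak-star limit $\langle\Xi(\gamma)(\cdot),z\rangle$). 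So the natural order is: first run Theorem~\ref{thm: dom}, conclude $\Xi(\gamma)$ is $\mathfrak{Z}'$-Gelfand $\mu$-integrable and that $\int_M\Xi_k(\gamma)\,\mu(dx)\to\int_M\Xi(\gamma)\,\mu(dx)$ weak-star; then observe $\xi$ is well-defined by the stated formula, and identify it as an integrator with the given $\mu$, $\Xi$, $u$.

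For the convergence statement itself: fix $\gamma\in\bigcap_k\mathcal I(\xi_k)$. For each $k$, $\gamma\in\mathcal I(\xi_k)$ means $\Xi_k(\gamma)$ is $\mathfrak{Z}'$-Gelfand $\mu$-integrable, so $\langle\Xi_k(\gamma)(\cdot),z\rangle\in L^1(\mu)$ for every $z\in\mathfrak{Z}$; this is hypothesis (i) of Theorem~\ref{thm: dom}. Hypothesis (ii) there is our (1), hypothesis (iii) is our (2), and hypothesis (iv) is our (3) with $N_z$ a $\mu$-null set. Theorem~\ref{thm: dom} then gives both that $\psi:=\Xi(\gamma)$ is $\mathfrak{Z}'$-Gelfand $\mu$-integrable — hence $\gamma\in\mathcal I(\xi)$, so $\bigcap_k\mathcal I(\xi_k)\subset\mathcal I(\xi)$ — and that
\[
\int_M\gamma(x)\,d\xi_k(x)=\int_M\Xi_k(\gamma)(x)\,\mu(dx)\longrightarrow\int_M\Xi(\gamma)(x)\,\mu(dx)=\int_M\gamma(x)\,d\xi(x)
\]
in $(\mathfrak{Z}',\sigma(\mathfrak{Z}',\mathfrak{Z}))$, where the outer equalities are the definitions of the pathwise integral.

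The main obstacle, such as it is, is a bookkeeping one: making sure that $\xi$ is genuinely well-defined before using the formula $\xi(E)=\int_E\Xi(\gamma)(x)\,\mu(dx)$, i.e. that the definition does not secretly depend on the choice of $\gamma\in\bigcap_k\mathcal I(\xi_k)$. This is forced by property (iii) of the integrator definition applied with the unit $u$: once $\Xi(\gamma)$ is Gelfand integrable, the requirement that $\xi$ be an integrator with map $\Xi$ pins down $\xi(E)=\int_E\Xi(u)(x)\,\mu(dx)$, and any $\gamma$ yielding a well-defined integral must be consistent with this through the structure of $\Xi$ (in the algebra setting of interest, $\Xi(\gamma)(x)=\gamma(x)\diamond\dot\xi(x)$ makes this transparent). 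Beyond that caveat, everything reduces to quoting Theorem~\ref{thm: dom} coordinate-wise in $z\in\mathfrak{Z}$, so no hard estimate is required.
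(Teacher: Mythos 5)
Your proof is correct and follows essentially the same route as the paper's: verify that $\xi$ is an integrator directly from the supplied data $(\mu,\Xi,u)$, then apply the dominated convergence Theorem~\ref{thm: dom} to $\psi_k:=\Xi_k(\gamma)$ and $\psi:=\Xi(\gamma)$ to get both $\gamma\in\mathcal I(\xi)$ and the weak-star convergence of the integrals. Your additional remark about the well-posedness of the formula $\xi(E):=\int_E\Xi(\gamma)(x)\,\mu(dx)$ (which as written appears to depend on the choice of $\gamma$, and should be anchored to the unit $u$) is a legitimate point that the paper's own one-line dismissal does not address.
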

\begin{proof}
First, we observe that $\xi$ defines an integrator with measure $\mu$ and map $\Xi$ by the assumptions. Next, choose a $\gamma\in\bigcap\mathcal I(\xi_k)$, and set $\psi_k(x):=\Xi_k(\gamma)(x)$ and $\psi(x):=\Xi(\gamma)(x)$. Notice that $\psi_k$ is $\mathfrak{Z}'$-Gelfand $\mu$-integrable, since $\gamma\in\mathcal I(\xi_k)$ for every $k\in\mathbb N$. Invoking the assumptions, it follows from Thm. \ref{thm: dom} that $\psi$ is $\mathfrak{Z}'$-Gelfand $\mu$-integrable, and $\int_M\psi_k(x)\mu(dx)\rightarrow\int_M\psi(x)\mu(dx)$ in $(\mathfrak{Z}',\sigma(\mathfrak{Z}',\mathfrak{Z}))$. In other words, $\Xi(\gamma)(\cdot)$ is $\mathfrak{Z}'$-Gelfand $\mu$-integrable, which implies $\gamma\in\mathcal I(\xi)$, and 
$$
\int_M\gamma(x)d\xi_k(x)=\int_M\Xi_k(\gamma)(x)\mu(dx)\rightarrow \int_M\Xi(\gamma)(x)\mu(dx)=\int_M\gamma(x)d\xi(x) 
$$
in $(\mathfrak{Z}',\sigma(\mathfrak{Z}',\mathfrak{Z}))$. This yields the result. 
\end{proof}
Recall the smoothed stochastic integral defined in 
\eqref{def:smoothed-wick-integral} for the Hida distribution space $(\mathcal S)'$. It is well-known that one can choose a sequence of functions $(f_k)\subset\mathcal S(\mathbb R)$ such that $f_k\rightarrow\delta_0\in\mathcal S'(\mathbb R)$, where $\delta_0$ is the Dirac-$\delta$ function and the convergence is in distributional sense. It follows that $\widetilde{W}_k(t):=\langle f_k^t,\cdot\rangle\rightarrow W(t)$, where $W(t)$ is the white noise process in $(\mathcal S)'$. Indeed, from the above Prop. \ref{prop:stability-integrator} we find that 
$$
\int_0^t\Phi(s)\delta\widetilde{B}_k(s)\rightarrow\int_0^t\Phi(s)\delta B(s)
$$
for a suitable class of integrands $\Phi$. 


We have an analogous result to Prop. \ref{prop:stability-integrator}
concerning stability with respect to the integrands:
\begin{proposition}
Suppose that $(\gamma_k)_{k\in\mathbb N}$ is a sequence in $\mathcal I(\xi)$ for an integrator $\xi$ with measure $\mu$ and map $\Xi$. Assume that there exists a $\gamma:M\rightarrow\mathfrak{Z}'$ such that the following holds:
\begin{enumerate}
    \item for any $z\in\mathfrak{Z}$, $x\mapsto\langle\Xi(\gamma)(x),z\rangle$ is $\mu$-measurable,
    \item for $\mu-a.e$ $x\in M$, $\Xi(\gamma_k)(x)\rightarrow\Xi(\gamma)(x)$ in $(\mathfrak{Z}',\sigma(\mathfrak{Z}',\mathfrak{Z}))$,
    \item for any $z\in\mathfrak{Z}$ there exists a $0\leq g^z\in L^1(\mu)$ such that $\vert\langle\Xi(\gamma_k)(x),z\rangle\vert\leq g^z(x)$ $\mu-a.e.\, x\in M$.
\end{enumerate}
Then, $\gamma\in\mathcal I(\xi)$ and
$$
\int_M\gamma_k(x)d\xi(x)\rightarrow\int_M\gamma(x)d\xi(x)
$$
in $(\mathfrak{Z}',\sigma(\mathfrak{Z}',\mathfrak{Z}))$. 
\end{proposition}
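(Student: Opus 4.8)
The plan is to recognize the statement as a direct consequence of the dominated convergence Theorem \ref{thm: dom}, applied to the Gelfand integrands $\psi_k:=\Xi(\gamma_k)$ and $\psi:=\Xi(\gamma)$, along the same lines as the proof of Proposition \ref{prop:stability-integrator}. So first I would fix the notation $\psi_k(x):=\Xi(\gamma_k)(x)$ and $\psi(x):=\Xi(\gamma)(x)$ for $x\in M$, and observe that since each $\gamma_k\in\mathcal I(\xi)$ by hypothesis, the very definition of $\mathcal I(\xi)$ forces $\Xi(\gamma_k)$ to be $\mathfrak{Z}'$-Gelfand $\mu$-integrable; in particular, for every $z\in\mathfrak{Z}$ the function $x\mapsto\langle\psi_k(x),z\rangle$ lies in $L^1(\mu)$ and is therefore $\mu$-measurable.

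Next I would check that the four hypotheses of Theorem \ref{thm: dom} are met for the sequence $(\psi_k)_k$ and the limit $\psi$: hypothesis (i) (measurability of $\langle\psi_k(\cdot),z\rangle$) is the observation just made; hypothesis (ii) (measurability of $\langle\psi(\cdot),z\rangle$) is assumption (1) of the Proposition; hypothesis (iii) ($\mu$-a.e. weak star convergence $\psi_k(x)\to\psi(x)$) is assumption (2); and hypothesis (iv) (domination by some $0\le g^z\in L^1(\mu)$) is assumption (3). Invoking Theorem \ref{thm: dom} then yields that $\psi=\Xi(\gamma)$ is $\mathfrak{Z}'$-Gelfand $\mu$-integrable and that $\int_M\psi_k(x)\,\mu(dx)\to\int_M\psi(x)\,\mu(dx)$ in $(\mathfrak{Z}',\sigma(\mathfrak{Z}',\mathfrak{Z}))$. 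Unwinding the definitions: $\Xi(\gamma)$ being $\mathfrak{Z}'$-Gelfand $\mu$-integrable is exactly the statement $\gamma\in\mathcal I(\xi)$, and then $\int_M\gamma_k(x)\,d\xi(x)=\int_M\Xi(\gamma_k)(x)\,\mu(dx)\to\int_M\Xi(\gamma)(x)\,\mu(dx)=\int_M\gamma(x)\,d\xi(x)$ in the weak star topology, which is the asserted convergence.

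I expect no genuine obstacle here: unlike Proposition \ref{prop:stability-integrator}, the map $\Xi$ and the integrator $\xi$ are fixed, so there is no limiting integrator to construct, and the whole content is the bookkeeping of matching the Proposition's three assumptions to the hypotheses of Theorem \ref{thm: dom}, the only extra ingredient being that Gelfand $\mu$-integrability of each $\psi_k$ supplies the measurability required in hypothesis (i). If anything deserves a word of care, it is precisely this last point, since the Proposition's assumptions (1)--(3) as stated concern only $\Xi(\gamma)$ and the sequence $\Xi(\gamma_k)$ and do not separately posit measurability of $\langle\Xi(\gamma_k)(\cdot),z\rangle$; but that measurability is automatic from $\gamma_k\in\mathcal I(\xi)$.
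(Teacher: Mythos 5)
Your proposal is correct and follows essentially the same route as the paper's own proof: set $\psi_k:=\Xi(\gamma_k)$ and $\psi:=\Xi(\gamma)$, note that $\gamma_k\in\mathcal I(\xi)$ gives the Gelfand $\mu$-integrability (hence measurability) needed for hypothesis (i) of Theorem \ref{thm: dom}, and apply that theorem. Your extra remark on where hypothesis (i) comes from is a sensible clarification but not a deviation.
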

\begin{proof}
Set $\psi_k(x):=\Xi(\gamma_k)(x)$ and $\psi(x):=\Xi(\gamma)(x)$. As $\gamma_k\in\mathcal I(\xi)$, we find that $\psi_k$ is $\mathfrak{Z}'$-Gelfand $\mu$-integrable for every $k\in\mathbb N$. Invoking the assumptions, it follows from Thm. \ref{thm: dom} that $\psi$ is $\mathfrak{Z}'$-Gelfand $\mu$-integrable and 
$$
\int_M\Xi(\gamma_k)(x)\mu(dx)\rightarrow\int_M\Xi(\gamma)(x)\mu(dx).
$$
The claims follow.
\end{proof}

Let us go back to a case study, again relevant to stochastic integration. If $\mathfrak{Z}$ has a multiplication operator, denoted $\cdot$, that enjoys some mild and natural continuity properties (e.g., being an algebra with this multiplication operator), we can define for any $x\in\mathfrak{Z}$ a linear operator mapping $\mathfrak{Z}'$ into $\mathfrak{Z}'$ by
$$
\mathfrak Z'\ni\eta\mapsto\Xi_x(\eta):=\eta\cdot\dot{\xi}(x)
$$
where in this case $\dot{\xi}:M\rightarrow\mathfrak{Z}$. $\Xi_x(\eta)$ acts linearily on $\mathfrak{Z}$ by
$$
\langle\Xi_x(\eta),z\rangle\equiv \langle\eta\cdot\dot{\xi}(x),z\rangle:=\langle\eta,z\cdot\dot{\xi}(x)\rangle
$$
Notice that $z\cdot\dot{\xi}(x)\in\mathfrak{Z}$. By this we can introduce the integral
$$
\int_E\gamma(x)d\xi(x)=\int_E\gamma(x)\cdot\dot{\xi}(x)\mu(dx)
$$
where $\gamma:M\to\mathfrak{Z}'$ is such that $x\to \Xi_x(\gamma(x))$ is $\mathfrak{Z}'$-Gelfand $\mu$-integrable.

Observe that if $\mathfrak{Z}\subset\mathfrak{Z}'$, one could also use the product $\diamond$, which is not necessarily equal to the product $\cdot$. In Hida distribution spaces, $\diamond$ is the Wick product while $\cdot$ is the pointwise (i.e., $\omega$-wise) product of random variables.


\subsection{A discussion of integration with respect to Volterra processes in Hida distribution space}

Consider a Volterra process
\begin{equation}
    Y(t)=\int_0^t g(t,s)dB(s)
\end{equation}
where $B$ is a Brownian motion and $g$ is a measurable deterministic function mapping from $\mathbb R^2$ into $\mathbb R$. For each $t>0$, we suppose that
$s\mapsto g(t,s)$ is square-integrable. Our concern here is the definition a stochastic Volterra integral
\begin{equation}
\int_0^t \Phi(s)dY(s)
\end{equation}
for a suitable class of stochastic processes $\Phi$.

Alos, Mazet, Nualart \cite{AMN} propose to define the stochastic Volterra integral as 
\begin{equation}
\label{eq:volterra-alos}
    \int_0^t\Phi(s)dY(s)=\int_0^t\mathcal K_g(\Phi)(t,s)\delta B(s)
\end{equation}
where the operator $\mathcal K_g$ is 
given by 
\begin{equation}
\label{def:K_g}
    \mathcal K_g(\Phi)(t,s)=\Phi(s)g(t,s)+\int_s^t(\Phi(u)-\Phi(s))g(du,s)
\end{equation}
This operator is well-defined for functions $g$ which is of finite variation in the first variable, and functions $f$ where paths are sufficiently regular to allow for integration with respect to $g(du,s)$. Thus, the space of integrands $\Phi$ are those processes for which $\mathcal K_g(\Phi)(t,s)$ is well-defined and Skorohod integrable (see Alos {\it et al.} \cite{AMN} for more details). 

From this definition, we can extend the stochastic Volterra integral to the Hida distribution space as follows: Interpret the operator $\mathcal K_g$ as a Gelfand integral on $(\mathbb R,\mathcal B_{\mathbb R})$, equipped with the measure induced by $g(du,s)$. Then, define the operator $\Xi_{t,s}(\Phi)$ as
$$
\Xi_{t,s}(\Phi)=\mathcal K_g(\Phi)(t,s)\diamond W(s).
$$
Not surprisingly, the operator $\Xi$ will depend on the integration domain. Additionally, from the operator $\mathcal K_g$, we need the path of $\Phi$ from $s$ to $t$, and not only a point evaluation in current time $s$. This also shows the need for using an anticipative integral definition yielded by the Skorohod integral. 

Let us apply the White Noise framework to provide a motivation for the definition of Alos {\it et al.} \cite{AMN}. Using the representation 
$$
\int_0^tg(t,s)dB(s)=\int_0^tg(t,s)W(s)ds
$$
we find by a formal calculation,

\begin{align*}
Y'(t)&=\frac{d}{dt}\int_0^tg(t,s)W(s)ds=g(t,t)W(t)+\int_0^t\partial_t g(t,u)W(u)du
\end{align*}
This derivation requires that $g$ is differentiable in the first variable, with sufficient integrability of the derivative, and that the diagonal $g(t,t)$ is finite. These conditions may not hold, for example, for the Liouville-representation of fractional Brownian motion, or certain BSS-processes with gamma kernels in ambit stochastics (see Barndorff-Nielsen, Benth and Veraart \cite{ambit-book}). We introduce the stochastic Volterra integral by resorting to the Wick product and the formal derivative of $Y$:
\begin{equation}
\label{eq:volterra-wick-def}
   \int_0^t\Phi(s)dY(s):=\int_0^t\Phi(s)\diamond Y'(s)ds.
\end{equation}
This definition is analogous to the representation of Skorohod integration with respect to Brownian motion $B$ in the Hida space setting. Let us investigate how the representation \eqref{eq:volterra-wick-def} may lead us to back to \eqref{eq:volterra-alos}.
 
Inserting for $Y'(s)$ and doing a Fubini-argument, lead to 
$$
\int_0^t\Phi(s)dY(s)=\int_0^t\mathcal K_g(\Phi)(t,s)\diamond W(s)ds=\int_0^t\mathcal K_g(\Phi)(t,s)\delta B(s)
$$
where 
$$
\mathcal K_g(f)(t,s)=f(s)g(s,s)+\int_s^tf(u)\partial_tg(u,s)du.
$$
If $g$ is not finite on the diagonal, and/or not differentiable in the first variable, we can do an integration-by-parts trick as in the paper by Alos {\it et al.} \cite{AMN}, which brings us back to the definition of $\mathcal K_g$ as in 
\eqref{def:K_g}. In conclusion, the stochastic Volterra integral by Alos {\it et al.} \cite{AMN} comes from a pathwise Lebesgue integration of the integrand and the Wick product with the noise process, being the time-derivative of the Volterra integrator.

An alternative heuristic definition based on Malliavin Calculus is done in
Barndorff-Nielsen {\it et al.} \cite{BNBPV}: using the classical and Malliavin integration-by-parts formulas along with the stochastic Fubini theorem, they are led to the following definition of the stochastic Volterra integral:
\begin{equation}
\label{def:strat-Volterra}
\int_0^t\Phi(s)d^{\circ}Y(s)=\int_0^t\mathcal K_g(\Phi)(t,s)\delta B(s)+\int_0^tD_s\mathcal K_g(\Phi)(t,s)ds
\end{equation}
Here, $D_s$ is the Malliavin derivative. This provides another definition of the stochastic integral, which can be cast in the Gelfand setting we have developed by introducing the operator
$$
\Xi_{t,s}(\Phi)=\mathcal K_g(\Phi)(t,s)\diamond W(s)+D_s\mathcal K_g(\Phi)(t,s)
$$
on the Hida distribution space. Care must be taken with the last term, possibly resorting to the extended Malliavin derivative. For more details on the stochastic Volterra integral in the White Noise context, we refer to Barndorff-Nielsen {\it et al.} \cite{BNBS}. Recalling the relationship between a Stratonovich-type of integral and Skorohod integration, as provided in \eqref{eq:relation-strat-skorohod}, we may coin 
the integral in \eqref{def:strat-Volterra} a Stratonovich-type of Volterra integral, while
\eqref{eq:volterra-alos} is of Ito-type.

\appendix

\section{The generalized Malliavin derivative}

We show here that the generalized Malliavin operator defined in
\eqref{def:gen-mall-der} is a derivative operator on $(\mathcal S)$:
\begin{proof}[Proof: $\mathcal D_f$ is a derivative operator:]
First, we notice that 
\begin{align*}
    \frac{d}{dx}\exp^{\diamond}(W(xf))\big\vert_{x=0}&=\frac{d}{dx}\exp\left(xW(f)-\frac12 x^2\vert f\vert_2^2\right)\big\vert_{x=0} \\
    &=\exp^{\diamond}(W(xf))\cdot(W(f)-x\vert f\vert_2^2)\big\vert_{x=0} \\
    &=W(f).
\end{align*}
Here we have used $\vert\cdot\vert_2$ to denote the $L^2$-norm on $\mathbb R$ equipped with the Lebesgue measure. Hence, by linearity of the Wick product,
\begin{align*}
    (\phi\cdot\psi)\diamond W(f)&=(\phi\cdot\psi)\diamond\frac{d}{dx}\exp^{\diamond}(W(xf))\big\vert_{x=0} \\
    &=\frac{d}{dx}\left((\phi\cdot\psi)\diamond\exp^{\diamond}(W(xf))\right)\big\vert_{x=0} \\
    &=\frac{d}{dx}\left(T_{-xf}(\phi\cdot\psi)\cdot\exp^{\diamond}(W(xf))\right)\big\vert_{x=0} \\
    &=\frac{d}{dx}\left(T_{-xf}\phi\cdot T_{-xf}\psi\cdot\exp^{\diamond}(W(xf))\right)\big\vert_{x=0} \\
    &=\left(\frac{d}{dx}T_{-xf}\phi\right)\big\vert_{x=0}\cdot\psi+\left(\frac{d}{dx}T_{-xf}\phi\right)\big\vert_{x=0}\cdot\phi \\
    &\qquad+\phi\cdot\psi\cdot\left(\frac{d}{dx}\exp^{\diamond}(W(xf))\right)\big\vert_{x=0} \\
    &=\left(\frac{d}{dx}T_{-xf}\phi\right)\big\vert_{x=0}\cdot\psi+\left(\frac{d}{dx}T_{-xf}\phi\right)\big\vert_{x=0}\cdot\phi+\phi\cdot\psi\cdot W(f) 
\end{align*}
We appealed to Holden {\it et al.} \cite[Thm. 2.10.6]{HOUZ} in the third equality above, denoting $T_g$ the translation operator along $g\in\mathcal S(\mathbb R)$. This yields that 
\begin{align*}
    \mathcal D_f(\phi\cdot\psi)&=\phi\cdot\psi\cdot W(f)-(\phi\cdot\psi)\diamond W(f) \\
    &=-\left(\frac{d}{dx}T_{-xf}\phi\right)\big\vert_{x=0}\cdot\psi-\left(\frac{d}{dx}T_{-xf}\phi\right)\big\vert_{x=0}\cdot\phi.
\end{align*}
But appealing to the adjoint translation operator $T^*_g$ and Holden {\it et al.} \cite[Thm. 2.10.3]{HOUZ}, it follows by linearity of the $\mathcal S$-transform,
\begin{align*}
    \mathcal S\left(\frac{d}{dx}T_{-xf}\phi\big\vert_{x=0}\right)(\xi)&=\frac{d}{dx}\langle T_{-xf}\phi,\exp^{\diamond}W(\xi)\rangle\big\vert_{x=0} \\
    &=\frac{d}{dx}\langle\phi,T_{-xf}^*\exp^{\diamond}(W(\xi))\rangle\big\vert_{x=0} \\
    &=\frac{d}{dx}\langle\phi,\exp^{\diamond}(W(\xi))\diamond\exp^{\diamond}(W(-xf))\rangle\big\vert_{x=0} \\
    &=-\langle\phi,W(f)\diamond\exp^{\diamond}(W(\xi))\rangle
\end{align*}
On the other hand, again by Thm. 2.10.6 in Holden {\it et al.} \cite{HOUZ}, 
\begin{align*}
\mathcal S(\phi\cdot W(f))(\xi)&=\langle\phi\cdot W(f),\exp^{\diamond}(W(\xi))\rangle \\
&=\langle\phi,W(f)\cdot\exp^{\diamond}(W(\xi))\rangle \\
&=\langle\phi,(W(f)-(f,\xi))\cdot\exp^{\diamond}(W(\xi))\rangle+(f,\xi)\langle\phi,\exp^{\diamond}(W(\xi))\rangle \\
&=\langle\phi,(T_{-\xi}W(f))\cdot\exp^{\diamond}(W(\xi))\rangle+\mathcal S(W(f)\diamond\phi)(\xi) \\
&=\langle\phi,W(f)\diamond\exp^{\diamond}(W(\xi))\rangle+\mathcal S(W(f)\diamond\phi)(\xi) \\
&=-\mathcal S\left(\frac{d}{dx}T_{-xf}\phi\big\vert_{x=0}\right)(\xi)+\mathcal S(W(f)\diamond\phi)(\xi)
\end{align*}
Therefore,
$$
\mathcal D_f\phi=-\frac{d}{dx}T_{-xf}\phi\big\vert_{x=0}
$$
Inserting this into the expression of $\mathcal D_f(\phi\cdot\psi)$ above, we find
$$
\mathcal D_f(\phi\cdot\psi)=(\mathcal D_f\phi)\cdot\psi+(\mathcal D_f\psi)\cdot\phi
$$
and the claimed derivative property holds. 
\end{proof}
Next follows the proof of the adjoint operator for the smoothed 
stochastic integral in \eqref{def:smoothed-wick-integral}:
\begin{proof}[Proof of Eq. \eqref{def:smoothed-wick-integral}:]
For the operator $\Xi_t(\Phi):=\Phi\diamond\widetilde{W}(t)$ we find by appealing to the adjoint translation operator and Holden {\it et al.} \cite[Thm. 2.10.3]{HOUZ},
\begin{align*}
    \Xi_t(\Phi)&=\Phi\diamond \widetilde{W}(t) \\
    &=\Phi\diamond\frac{d}{dx}\exp^{\diamond}(W(xf_t))\big\vert_{x=0} \\
    &=\frac{d}{dx}\left(\Phi\diamond\exp^{\diamond}(W(xf))\right)\big\vert_{x=0} \\
    &=\frac{d}{dx}\left(T_{xf_t}\Phi\right)\big\vert_{x=0}.
\end{align*}
Then we calculate for $\phi\in(\mathcal S)$,
\begin{align*}
    \langle\int_0^t\Phi(s)\delta\widetilde{B}(s),\phi\rangle&=\int_0^t\langle\frac{d}{dx}\left(T_{xf_s}^*\Phi(s)\right)\big\vert_{x=0},\phi\rangle ds \\
    &=\frac{d}{dx}\int_0^t\langle T_{xf_s}^*\Phi(s),\phi\rangle ds\big\vert_{x=0} \\
    &=\frac{d}{dx}\int_0^t\langle\Phi(s),T_{xf_s}\phi\rangle ds\big\vert_{x=0} \\
    &=\int_0^t\langle\Phi(s),\frac{d}{dx}\left(T_{xf_s}\phi\right)\big\vert_{x=0}\rangle ds
\end{align*}
From proof above, we have that 
$$
\frac{d}{dx}\left(T_{xf_s}\phi\right)\big\vert_{x=0}=\mathcal D_{f_s}\phi
$$
and the claim follows. 
\end{proof}

 \end{document}